\theoremstyle{plain}
\newtheorem{thm}{Theorem}[section]
\newtheorem{prp}[thm]{Proposition}
\newtheorem{cor}[thm]{Corollary}
\newtheorem{lem}[thm]{Lemma}
\newtheorem{ex}[thm]{Example}
\theoremstyle{remark}
\newtheorem{prob}[thm]{Problem}
\newtheorem{rem}[thm]{Remark}
\numberwithin{equation}{section}
\newcommand{\RR}{\mathcal{R}}
\newcommand{\FF}{\mathcal{F}}
\newcommand{\eps}{\varepsilon}
\DeclareMathOperator{\dom}{dom}
\newcommand{\wh}{\widehat}
\newcommand{\dx}{\;{\rm d}}
\newcommand{\ol}{\overline}
\newcommand{\sub}{\subseteq}
\newcommand{\sm}{\setminus}
\newcommand{\pe}{\wp}
\newcommand{\Ce}{\protect{\mathfrak C}}
\newcommand{\Se}{\protect{\mathfrak S}}
\newcommand{\Ze}{\protect{\mathfrak Z}}
\newcommand{\E}{\protect{\mathbb E}}
\newcommand{\F}{\protect{\mathbb F}}
\newcommand{\tlo}{2^{<\omega}}
\newcommand{\vf}{\varphi}
\newcommand{\sfr}{\!\smallfrown\!}
\newcommand{\tp}{\tau_p}
\begin{document}
%\begin{flushright} \textsf{PRELIMINARY VERSION}
%\end{flushright}
\vskip1cm

\title
{On Borel structures in the Banach space $C(\beta\omega)$}

\author{Witold Marciszewski}
\author[Grzegorz Plebanek]{Grzegorz Plebanek}
\address{Institute of Mathematics\\
University of Warsaw\\ Banacha 2\newline 02--097 Warszawa\\
Poland} \email{wmarcisz@mimuw.edu.pl}

\address{Instytut Matematyczny\\ Uniwersytet Wroc\l awski\\ pl.\ Grunwaldzki 2/4\\
\newline 50--384 Wroc\-\l aw, Poland}

 \email{grzes@math.uni.wroc.pl}

\date{\today}
\subjclass[2010]{Primary 46B26, 46E15, 54C35, 54H05}
\keywords{weak topology, pointwise topology, $C(K)$, Borel
structure}

\thanks{Research of the first author was partially supported by the National
Science Center research grant DEC-2012/07/B/ST1/03363}
\thanks{Research of the second author was partially supported by MNiSW
Grant N N201 418939 (2010--2013)}

\begin{abstract}
M.\ Talagrand showed that, for the \v{C}ech-Stone compactification
$\beta\omega$ of the space of natural numbers $\omega$, the norm
and the weak topology generate different Borel structures in the
Banach space $C(\beta\omega)$. We prove that the Borel structures
in $C(\beta\omega)$ generated by the weak and the pointwise
topology are also different.

We also show that in $C(\omega^*)$, where $\omega^*=\beta\omega\sm\omega$,
 there is no countable family of pointwise
Borel sets separating functions from $C(\omega^*)$.
\end{abstract}

%\commby{}
\maketitle

\section{Introduction}\label{s1}

Given a compact space $K$, by $C(K)$ we denote the Banach space of
continuous real-valued functions $K$, equipped with the standard
supremum norm. If $K=\beta\omega$, the \v{C}ech-Stone
compactification of the space $\omega$ of natural numbers, then
$C(\beta\omega)$ is isometric to the classical Banach space
$l_\infty$.

One can consider three natural topologies on  $C(K)$:
\[\tp\sub weak\sub norm,\]
where $\tp$ is the topology of pointwise convergence.
Consequently,  one has three corresponding Borel $\sigma$-algebras
\[Borel(C(K),\tp)\sub Borel(C(K),weak) \sub Borel(C(K),norm).\]
Those three $\sigma$-algebras are equal for many classes of
nonmetrizable spaces $K$, this is the case for all spaces $K$ such
that the space $C(K)$ admits the so called pointwise Kadec
renorming, see \cite{Ed2} and \cite{Ra}, we also refer the reader
to the paper \cite[Sec.\ 3]{MP} for some comments concerning
coincidence of these $\sigma$-algebras.

On the other hand, Talagrand \cite{Ta} proved that
\[Borel(C(\beta\omega),weak)\neq Borel(C(\beta\omega), norm).\]
Marciszewski and Pol \cite{MP} showed that  $Borel(C(S),\tp)\neq
Borel(C(S), weak)$ for $S$ being the Stone space of the measure
algebra. Since, for the space $S$, the Banach spaces $C(S)$ and
$C(\beta\omega)$ are isomorphic, it follows that $C(S)$ has three
different Borel structures. Let us note that the Borel structures
in function spaces $(C(S),\tp)$ and $(C(\beta \omega ),\tp)$ are
essentially different, see Remark \ref{Stone_no_count_sep}.

We show in the present paper that $Borel(C(\beta\omega),\tp)\neq
Borel(C(\beta\omega), weak)$; our result and Talagrand's theorem
mentioned above imply that, even though $\beta\omega$ is
separable, the space $C(\beta\omega)$ possesses three different
Borel structures as well. Proving our main result, stated below as
Theorem \ref{main_thm}, we build on ideas from \cite{MP} and show
that in fact there is a measure $\wh{\mu}\in C(\beta\omega)^*$
which is not pointwise Borel measurable.

Recall that, if $\varphi:K\to L$ is a continuous surjection, then
the map $f\mapsto f\circ\varphi$ defines an embedding of $C(L)$
into $C(K)$ with respect to the norm, weak, and pointwise
topologies. Since $\beta\omega$ is a continuous image of
$\omega^*$, it follows from Theorem \ref{main_thm} that for
$\omega^*=\beta\omega\sm\omega$ one also has
\[Borel(C(\omega^*),\tp)\neq Borel(C(\omega^*), weak).\]

This result was obtained in \cite[remark 6]{MP} under some
additional set-theoretic assumption.

We show in Section \ref{s4} that no sequence of pointwise Borel
sets separates points of $C(\omega^*)$. Section \ref{s5} contains
some remarks concerning $\sigma$-fields of Baire sets in function
spaces on $\beta\omega$ and $\omega^*$.

\section{Measures on $\omega$ and $\beta\omega$}\label{s2}

We shall consider only nonnegative, finite measures. We will use
the well-known fact that any finitely additive measure $\mu$ on
$(\omega,\mathcal{P}(\omega))$ corresponds to a uniquely
determined Radon measure $\widehat{\mu}$ on $\beta\omega$ such
that $\mu(A)=\widehat{\mu}(\overline{A})$, for any
$A\in\mathcal{P}(\omega)$, where $\overline{A}$ is the closure of
$A$ in $\beta\omega$, cf. \cite{Fr}.

In the sequel, we consider only measures $\mu$ on $\omega$
vanishing on singletons; then for the corresponding measures
$\widehat{\mu}$ on $\beta\omega$, we have
$\widehat{\mu}(\omega)=0$, and we may as well treat such measures
$\widehat{\mu}$ as being defined on $\omega^*$.

The following auxiliary result can be found in  \cite[Theorem 2.2.4]{BJ}.

\begin{prp}\label{bart_judah}
If $(G_n)_n$ is a sequence of dense open subsets of $2^\omega$
then there is a sequence $(I_n)_n$ of pairwise disjoint finite
subsets of $\omega$ and a sequence of functions $\vf_n:I_n\to 2$
such that $x\in\bigcap_n G_n$ for every $x\in 2^\omega$ for which
the set $\{n\in\omega: x|I_n=\vf_n\}$ is infinite.
\end{prp}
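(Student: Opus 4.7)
My plan is to build $I_n$ and $\vf_n$ by recursion on $n$, maintaining the invariant
\[
[\vf_n]_{I_n} := \{x\in 2^\omega: x|I_n=\vf_n\} \sub G_0\cap G_1\cap\cdots\cap G_n.
\]
Granted this, the conclusion is immediate: if $x|I_n=\vf_n$ for infinitely many $n$, then for every fixed $m$ one picks some $n\geq m$ witnessing $x\in[\vf_n]_{I_n}\sub G_m$, so $x$ lies in every $G_m$.

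The core of the proof is the recursive step. Setting $F=I_0\cup\cdots\cup I_{n-1}$ and $H=G_0\cap\cdots\cap G_n$ (a dense open subset of $2^\omega$), the task is to find a finite $I\sub\omega\sm F$ and $\vf:I\to 2$ with $[\vf]_I\sub H$. I plan to use a marginalization argument: for each of the finitely many $s:F\to 2$ the section
\[
H_s := \{y\in 2^{\omega\sm F}: s\sfr y\in H\},
\]
where $s\sfr y$ denotes the combined function on $\omega$ taking values $s$ on $F$ and $y$ on $\omega\sm F$, is open (by continuity of $y\mapsto s\sfr y$) and dense in $2^{\omega\sm F}$ (since $H$, being dense in $2^\omega$, meets every basic clopen of the form $[s\sfr t]_{F\cup T}$). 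Consequently $\bigcap_{s:F\to 2}H_s$ is a finite intersection of dense open sets, hence itself dense open, and so contains some non-empty basic clopen $[\vf]_I$ with $I\sub\omega\sm F$ finite. Splitting any $x\in 2^\omega$ with $x|I=\vf$ into its restrictions to $F$ and $\omega\sm F$ then verifies $[\vf]_I\sub H$, completing the inductive step.

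The main obstacle is precisely the disjointness requirement $I_n\cap F=\emptyset$; arranging this while still landing inside the dense open set $H$ is what the marginalization trick is designed for. No other ingredient is delicate, and the overall argument reduces to a routine induction combined with one application of the Baire-style finite intersection of dense opens in $2^{\omega\sm F}$.
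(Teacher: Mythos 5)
Your proof is correct. Note that the paper itself gives no proof of this proposition --- it is quoted from Bartoszy\'nski--Judah \cite[Theorem 2.2.4]{BJ} --- but your argument is a sound, self-contained version of the standard one: the recursive invariant $\{x: x|I_n=\vf_n\}\sub G_0\cap\dots\cap G_n$ immediately yields the conclusion, and the inductive step is exactly the usual trick of intersecting, over the finitely many $s\colon I_0\cup\dots\cup I_{n-1}\to 2$, the dense open sections $H_s\sub 2^{\omega\sm F}$ and taking a basic clopen set inside that intersection. All the small points check out: the sections are open by continuity of $y\mapsto s\sfr y$ and dense because $H$ meets every cylinder $[s\sfr t]$, and choosing $I\sub\omega\sm F$ guarantees the required pairwise disjointness.
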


\begin{prp}\label{BP_nonmeasurable}
No nonzero measure on $\omega$, vanishing on singletons, is
measurable with respect to the $\sigma$-algebra of subsets of
$2^\omega$ having the Baire property.
\end{prp}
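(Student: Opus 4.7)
I argue by contradiction. Suppose $\mu$ is a nonzero, finitely additive measure on $\omega$ vanishing on singletons which, viewed as a function $\mu\colon\mathcal P(\omega)\equiv 2^\omega\to[0,\infty)$, is measurable with respect to the $\sigma$-algebra of sets with the Baire property; normalise so that $\mu(\omega)=1$.

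Since $\mu$ vanishes on every finite set, it is \emph{tail invariant}: $\mu(x\bigtriangleup F)=\mu(x)$ whenever $x\in 2^\omega$ and $F\sub\omega$ is finite. Consequently every level set $\{\mu>r\}$ is a tail subset of $2^\omega$ with the Baire property, so by the topological $0$-$1$ law it is either meager or comeager. Letting $c=\inf\{r\ge 0:\{\mu\le r\}\text{ is comeager}\}$, one sees that $\{\mu>r\}$ is comeager for $r<c$ and meager for $r>c$; intersecting the comeager sets $\{c-1/n<\mu\le c+1/n\}$ over $n\in\bon$ then shows that $C:=\{\mu=c\}$ is comeager. Next observe that the involution $T\colon 2^\omega\to 2^\omega$, $T(x)=\omega\sm x$, is a self-homeomorphism satisfying $\mu\circ T=1-\mu$. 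Therefore $T(C)=\{\mu=1-c\}$ is also comeager, and since comeager sets in a Baire space cannot be disjoint, we must have $c=1-c$, i.e.\ $c=1/2$.

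Now choose dense open sets $G_n\sub 2^\omega$ with $\bigcap_n G_n\sub C$ and apply Proposition~\ref{bart_judah} to obtain pairwise disjoint finite blocks $I_n\sub\omega$ and functions $\vf_n\colon I_n\to 2$ such that $x\in C$, and hence $\mu(x)=1/2$, whenever $\{n:x|I_n=\vf_n\}$ is infinite. Put $J_n:=\vf_n^{-1}(1)\sub I_n$, and for every infinite $S\sub\omega$ define
\[A_S:=\bigcup_{n\in S}J_n\sub\omega.\]
Because the blocks $I_n$ are pairwise disjoint, the characteristic function of $A_S$ coincides with $\vf_n$ on $I_n$ for every $n\in S$; the hypothesis of Proposition~\ref{bart_judah} thus applies to $A_S$, giving $\mu(A_S)=1/2$.

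Finally, partition $\omega$ into three pairwise disjoint infinite subsets $S_1,S_2,S_3$. The sets $A_{S_1},A_{S_2},A_{S_3}$ are pairwise disjoint (the $J_n$ lying in disjoint blocks), so finite additivity of $\mu$ yields
\[\mu(A_{S_1})+\mu(A_{S_2})+\mu(A_{S_3})=\mu\bigl(A_{S_1}\cup A_{S_2}\cup A_{S_3}\bigr)\le\mu(\omega)=1,\]
contradicting the fact that the left-hand side equals $3/2$. The main obstacle I anticipate is the category-theoretic step of identifying the essential value of $\mu$ as exactly $1/2$; once this is in hand, Proposition~\ref{bart_judah} combined with the three-way partition trick breaks finite additivity in a single line.
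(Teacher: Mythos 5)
Your proof is correct and follows essentially the same route as the paper's: the topological $0$--$1$ law for tail sets to locate a comeager level set, the complementation homeomorphism to pin its value at $1/2$, and then Proposition~\ref{bart_judah} with a three-way partition of the blocks to contradict finite additivity. The only cosmetic difference is how the essential value is identified (your infimum over $r$ versus the paper's nested dyadic intervals); the final contradiction is identical.
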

\begin{proof} Suppose, towards a contradiction, that $\mu$, treated as a function on
$2^\omega$, is measurable with respect to the $\sigma$-algebra of
subsets of $2^\omega$ having the Baire property. Without loss of
generality, we can assume that $\mu(\omega)=1$. The inverse image
$\mu^{-1}(S)$ of any Borel subset $S$ of the unit interval $[0,1]$
is a tail-set with the Baire property, hence, by 0--1 Law (see
\cite{Ox}) is either meager or comeager. Observe that there exist
(necessarily unique) $t\in [0,1]$ such that $\mu^{-1}(t)$ is
comeager. Indeed, if $\mu^{-1}(1)$ is comeager, then we are done.
Otherwise, we can define inductively a sequence of integers
$k_n\le 2^n-1$, such that $\mu^{-1}([k_n/2^n,(k_n+1)/2^n))$ is
comeager for $n\in\omega$. Then the required $t$ is a unique
element of $\bigcap_{n\in\omega} [k_n/2^n,(k_n+1)/2^n)$. The map
$h\colon\mathcal{P}(\omega)\to\mathcal{P}(\omega)$, defined by
$h(A) = \omega\setminus A$, is a homeomorphism of
$\mathcal{P}(\omega)$ such that $h(\mu^{-1}(t)) = \mu^{-1}(1-t)$.
Therefore $t=1-t$, and $t=1/2$.

By Proposition \ref{bart_judah} , we have functions $\vf_n:I_n\to 2$
defined on pairwise disjoint finite sets $I_n$ such that
$\mu(A)=1/2$ whenever $\chi_A$ agrees with infinitely many $\vf_n$'s.

Let
$N_1,N_2,N_3$ be a partition of $\omega$ consisting of infinite
sets and let
\[B_i = \bigcup\{\{k: \vf_n(k)=1\}: n\in N_i\},\]
$i=1,2,3$. Then, for each $i\le 3$, $\mu(B_i)=1/2$ and the sets
$B_i$ are pairwise disjoint, a contradiction.
\end{proof}

For any subset $A$ of $\omega$ we write
\begin{eqnarray*}
\overline{d}(A)= \limsup_n\frac{|A\cap n|}n,
\end{eqnarray*}
for the outer asymptotic density of a set $A$ and
\begin{eqnarray*}
d(A)= \lim_n\frac{|A\cap n|}n,
\end{eqnarray*}
whenever the set $A$ has the asymptotic density, i.e.\ when the
above limit exists.

Given a bounded sequence $(x_n)_{n\in\omega}$ and an ultrafilter
$\pe\in\omega^*$, by $\lim_\pe x_n$ we denote the $\pe$-limit of
$(x_n)$. For any ultrafilter $\pe\in\omega^*$, we define the measure
$d_\pe$ on $\omega$ by the formula
\begin{eqnarray*}
d_\pe(A)= \lim_\pe\frac{|A\cap n|}n\,,
\end{eqnarray*}
for $A\subseteq\omega$, cf. \cite{Fr} or \cite{BFPR}.

\begin{lem}\label{d_small}
For any ultrafilter $\pe\in\omega^*$ and any $\varepsilon> 0$,
there exists a set $A\in \pe$ having asymptotic density and such
that $d(A)<\varepsilon$.
\end{lem}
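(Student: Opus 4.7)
The plan is to use a very short partition argument based on the fact that an ultrafilter must pick out exactly one piece of any finite partition of $\omega$.

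First I would fix a positive integer $k$ with $1/k<\varepsilon$. Then I would consider the partition of $\omega$ into residue classes modulo $k$, namely $A_i=\{n\in\omega: n\equiv i\pmod k\}$ for $i=0,1,\ldots,k-1$. Each $A_i$ has asymptotic density exactly $1/k$, since $|A_i\cap n|/n \to 1/k$ as $n\to\infty$. In particular $d(A_i)$ exists and is strictly smaller than $\varepsilon$ for every $i$.

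Because $\{A_0,\ldots,A_{k-1}\}$ is a finite partition of $\omega$ and $\pe$ is an ultrafilter, exactly one of these sets belongs to $\pe$. Taking $A$ to be that member gives the desired set.

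There is really no obstacle here; the only thing to notice is that one needs a partition into pieces that all have asymptotic density (not merely upper density) bounded by $\varepsilon$, and the arithmetic progressions modulo a sufficiently large $k$ provide the simplest such partition. The ultrafilter property then does the rest.
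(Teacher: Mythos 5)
Your proof is correct and is essentially identical to the paper's own argument: both partition $\omega$ into residue classes modulo a fixed $k$ with $1/k<\varepsilon$ and use the ultrafilter property to select one class, which has asymptotic density exactly $1/k$.
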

\begin{proof} Take $n\ge 1$ such that $1/n<\varepsilon$ and
put $A_k= \{ni+k\colon i\in\omega\}$ for $k=0,1,\dots,n-1$. Then
there exists $k<n$ such that $A_k\in \pe$. Clearly, $d(A_k)=1/n$.
\end{proof}

We also recall the following standard fact concerning the outer
density. Here, for $A,B\subseteq\omega$, $A\subseteq^* B$ denotes,
as usual, that $A\setminus B$ is finite, and we denote by $A^*$
the set $\overline{A}\setminus A$, where $\overline{A}$ is the
closure of $A$ in $\beta\omega$.

\begin{lem}\label{small_covering}
Let $\varepsilon> 0$ and $A_n\subseteq\omega$, $n\in\omega$, be
such that $A_n\subseteq A_{n+1}$ and
$\overline{d}(A_n)<\varepsilon$ for every $n\in\omega$. Then there
exists $A\subseteq\bigcup_{n\in\omega}A_n$ such that
$\overline{d}(A)\le \varepsilon$ and $A_n\subseteq^* A$ for every
$n\in\omega$.
\end{lem}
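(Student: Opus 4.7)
The plan is to construct $A$ by concatenating appropriate initial segments of each $A_n$ and using monotonicity of the sequence to control the density.

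First I would use the assumption $\ol{d}(A_n)<\eps$ to choose, for every $n$, an integer $M_n$ such that $|A_n\cap k|/k<\eps$ for all $k\ge M_n$. Then I would fix a strictly increasing sequence $m_1<m_2<\dots$ of positive integers with $m_n\ge M_n$ (the exact rate of growth does not matter), and define
\[
A\;=\;\bigcup_{n\ge 1}\bigl(A_n\cap[m_n,m_{n+1})\bigr).
\]
By construction $A\sub\bigcup_n A_n$.

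For the containment $A_n\sub^* A$ I would argue as follows: if $x\in A_n$ and $x\ge m_n$, then $x\in[m_k,m_{k+1})$ for some $k\ge n$, and since $A_n\sub A_k$ we get $x\in A_k\cap[m_k,m_{k+1})\sub A$; hence $A_n\sm A\sub[0,m_n)$, which is finite.

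The crucial point, and in my view the only step that needs an idea rather than bookkeeping, is the density estimate. Here I would exploit monotonicity: for $k\in[m_n,m_{n+1})$, the portion of $A$ coming from earlier stages $j<n$ lies in $A_j\sub A_n$, while the portion from stage $n$ itself is $A_n\cap[m_n,k)$. Putting these together gives
\[
A\cap k\;\sub\;A_n\cap k,
\]
so $|A\cap k|/k\le|A_n\cap k|/k<\eps$ because $k\ge m_n\ge M_n$. This inequality holds for every $k\ge m_1$, so $\ol{d}(A)\le\eps$, as required. The naive alternative of summing contributions from distinct $A_j$'s would blow up the density, and avoiding that by using $A_j\sub A_n$ is the key observation.
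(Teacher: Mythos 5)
Your proof is correct and follows essentially the same route as the paper's: choose thresholds $M_n$ beyond which $|A_n\cap k|/k<\varepsilon$, set $A=\bigcup_n A_n\cap[m_n,m_{n+1})$ for an increasing sequence $m_n\ge M_n$, and use monotonicity to get $A\cap k\subseteq A_n\cap k$ on $[m_n,m_{n+1})$. Both the construction and the key density estimate coincide with the paper's argument.
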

\begin{proof} By the definition of $\overline{d}$ we have
\begin{eqnarray*} \forall n\in\omega\ \exists k_n\in\omega\
\forall k\ge k_n\ \frac{|A_n\cap k|}{k}<\varepsilon\,.
\end{eqnarray*}
Without loss of generality, we can assume that the sequence
$(k_n)$ is increasing. We define
\begin{eqnarray*} A= \bigcup_{n\in\omega} A_n\cap(k_{n+1}\setminus
k_n)\subseteq\bigcup_{n\in\omega} A_n\,.
\end{eqnarray*}
Since $A_n\subseteq A_{n+1}$, we have $(A_n\setminus k_n)\subseteq
A$, and therefore $A_n^*\subseteq A^*$ for every $n\in\omega$. For
any $k>k_0$, we have $k\in k_{n+1}\setminus k_n$, for some
$n\in\omega$, and $A\cap k \subseteq A_n\cap k$, therefore $|A\cap
k|/k<\varepsilon$, and consequently $\overline{d}(A)\le
\varepsilon$.
\end{proof}

Obviously, for the sets $A_n$ and $A$ as in the above lemma, we
have $\bigcup_{n\in\omega} A_n^*\subseteq A^*$ and ${d_\pe}(A)\le
\varepsilon$ for any ultrafilter $\pe\in\omega^*$. Let us note,
however, that this does not necessarily mean that for every
increasing sequence $A_0\sub A_1\sub\ldots\sub\omega$ such that
$d_\pe(A_n)<\eps$ there is $A$ almost containing every $A_n$ and
such that $d_\pe(A)\le\eps$. Measures on $P(\omega)$ with such an
approximation property may fail to exist, see \cite{Me} for
details.

\begin{cor}\label{null_on_sep}
For any ultrafilter $\pe\in\omega^*$, the measure $\widehat{d_\pe}$
vanishes on separable subsets of $\omega^*$.
\end{cor}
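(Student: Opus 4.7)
The plan is to reduce the statement to producing, for each $\varepsilon>0$, a single set $A\subseteq\omega$ whose ``trace at infinity'' $A^*$ covers the given separable set $S$ while satisfying $\widehat{d_\pe}(A^*)\leq\varepsilon$. Since the closure (in $\beta\omega$) of a separable subset of $\omega^*$ is again a separable subset of $\omega^*$ and has at least as large $\widehat{d_\pe}$-measure, I may assume from the outset that $S$ is a closed separable subspace of $\omega^*$. Fix a countable dense set $\{p_n:n\in\omega\}\subseteq S$; each $p_n$ is a free ultrafilter on $\omega$.

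The construction of $A$ is where the two preceding lemmas do all the work. Applying Lemma \ref{d_small} to each ultrafilter $p_n$ with threshold $\varepsilon/2^{n+1}$, I obtain $B_n\in p_n$ with $d(B_n)<\varepsilon/2^{n+1}$. Setting $A_n=B_0\cup\cdots\cup B_n$ gives an increasing sequence with $\overline{d}(A_n)<\varepsilon$ for every $n$, which is exactly the hypothesis of Lemma \ref{small_covering}. That lemma then supplies $A\subseteq\omega$ with $\overline{d}(A)\leq\varepsilon$ and $A_n\subseteq^*A$ for every $n$, so in particular $B_n^*\subseteq A_n^*\subseteq A^*$ in $\omega^*$.

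To conclude, I check that $S\subseteq A^*$ and then estimate the measure. For the inclusion: $B_n\in p_n$ together with $p_n$ free gives $p_n\in B_n^*\subseteq A^*$; since $A^*=\overline{A}\cap\omega^*$ is closed in $\beta\omega$, it contains the $\beta\omega$-closure of the dense sequence $\{p_n\}$, which in turn contains $S$. For the measure bound, the decomposition $\overline{A}=A\sqcup A^*$ together with $\widehat{d_\pe}(A)\leq\widehat{d_\pe}(\omega)=0$ yields
\[\widehat{d_\pe}(S)\leq\widehat{d_\pe}(A^*)=\widehat{d_\pe}(\overline{A})=d_\pe(A)\leq\overline{d}(A)\leq\varepsilon.\]
Letting $\varepsilon\to 0$ finishes the proof. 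The only step requiring any real thought is the uniform covering — passing from the individual density bounds $d(B_n)<\varepsilon/2^{n+1}$ to a \emph{single} set $A$ that almost contains every $B_n$ and still has outer density at most $\varepsilon$ — but this is packaged as Lemma \ref{small_covering}, so once the right partial sums $A_n$ are set up the argument is essentially a direct assembly of the preceding lemmas.
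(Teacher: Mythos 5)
Your proposal is correct and follows essentially the same route as the paper: pick $B_n\in p_n$ with density below $\varepsilon/2^{n+1}$ via Lemma \ref{d_small}, form the increasing unions $A_n$, apply Lemma \ref{small_covering} to get a single $A$ with $\overline{d}(A)\le\varepsilon$ almost containing every $A_n$, and conclude that the closed set $A^*$ contains the closure of $\{p_n\}$ while $\widehat{d_\pe}(A^*)\le\varepsilon$. The only differences are cosmetic (the reduction to closed $S$ and the explicit chain $\widehat{d_\pe}(A^*)=d_\pe(A)\le\overline{d}(A)$, which the paper leaves implicit).
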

\begin{proof} Let $X$ be a subset of $\omega^*$ contained in the
closure of a set $\{\FF_n\colon n\in\omega\}\subseteq\omega^*$. Fix
$\varepsilon> 0$. For any $n\in\omega$, by Lemma \ref{d_small}, we
can pick $B_n\in \FF_n$ with
$\overline{d}(B_n)<\varepsilon/2^{n+1}$. Then, for
$A_n=\bigcup_{k\le n} B_k$, we have
$\overline{d}(A_n)<\varepsilon$, and we can apply Lemma
\ref{small_covering} for the sequence $(A_n)$, obtaining the set
$A$ satisfying $\overline{d}(A)\le \varepsilon$. For any
$n\in\omega$, we have $B_n\subseteq A_n\subseteq^* A$, hence $A\in
\FF_n$. Therefore the closure in $\omega^*$ of the set $\{\FF_n\colon
n\in\omega\}$ is contained in $A^*$, and
$\widehat{d_\pe}(X)\le\widehat{d_\pe}(A^*)\le \varepsilon$. Since
$\varepsilon$ was arbitrarily chosen, it follows that
$\widehat{d_\pe}(X)= 0$.
\end{proof}

%\section{Proof of Theorem \ref{weak_pointwise}}
\section{$Borel(C(\beta\omega), weak)$ and $Borel(C(\beta\omega),\tp)$ are different}\label{s3}

Let $\pe$ be a fixed ultrafilter from $\omega^*$ and let $\mu=d_\pe$ be
the measure on $P(\omega)$ defined in section \ref{s2}; we write
$\wh{\mu}$ for the corresponding Radon measure on $\beta\omega$. Then
$\wh{\mu}$ is a continuous functional on
 $C(\beta\omega)$ so in particular
$\wh{\mu}$ is measurable with respect to the $\sigma$-algebra of
weakly Borel subsets of $C(\beta\omega)$. In this section we shall
show that the measure $\wh{\mu}$ is not pointwise Borel measurable
and in this way conclude our main result. The approach presented
below builds on the technique developed by Burke and Pol
\cite{BPo} and Marciszewski and Pol \cite{MP}.

We need to fix several pieces of notation. For a set $X$, by
$[X]^{<\omega}$ we denote the family of all finite subsets of $X$,
and $X^{<\omega}$ stands for the set of all finite sequences of
elements of $X$. Given sequences $s,t\in X^{<\omega}$, $s\sfr t$
denotes their concatenation. For functions $f$ and $g$, $f\prec g$
means that the domain $\dom(f)$ of $f$ is contained in the domain
of $g$ and $g|\dom(f) = f$. We also use this notation for
sequences, treating them as functions.

Writing $2=\{0,1\}$, we denote by $C_p(\beta\omega,2)$ the space
of all continuous functions $f:\beta\omega\to 2$ equipped with the
pointwise topology.

In the sequel we consider some subsets of
$(P(\omega))^2=P(\omega)\times P(\omega)$; a typical element of
such a set is a pair $c=(A,B)$, where $A,B\sub\omega$. Given some
$c_i\in (P(\omega))^2$, we  shall use the convention for elements
that every $c_i$ can be written as $c_i=(A_i,B_i)$.

Let $\Ce$ be a subset of $(P(\omega)\times P(\omega))^\omega$ of
those sequences $c=(c_0, c_1,\ldots)$ for which the following
conditions are satisfied  for every $i$:

\refstepcounter{thm}
%%%%%%%%%%zwieksza liczni twierdzen dla potrzeb ponizszego

\begin{itemize}
\item[{\bf  \arabic{section}.\arabic{thm}(1)}] \label{conditions}
$A_i\sub A_{i+1}$, $B_i\sub B_{i+1}$, $A_i\cap B_i=\emptyset$;
\item[{\bf  \arabic{section}.\arabic{thm}(2)}] $\ol{d}(A_i),
\ol{d}(B_i)<1/6$.
\end{itemize}

We moreover denote by $\Se$ the set of all finite sequences from
$(P(\omega)\times P(\omega))^{<\omega}$ satisfying conditions
\ref{conditions}.

Given $f\in C(\beta\omega)$, $i\in \{0,1\}$, and $A\sub\omega$, we
write $f{|A}\simeq i$ if the equality $f(x)=i$ holds for
$\wh{\mu}$-almost all $x\in A^*$.

We equip $P(\omega)\times P(\omega)$ with the discrete topology
and $\Ce$ with the product topology inherited from
$(P(\omega)\times P(\omega))^\omega$. Finally, we define a
topological space $\E$ that is crucial for our considerations  as
follows
\[ \E=\{(f,c)\in C_p(\beta\omega,2)\times \Ce:
f{|A_n}\simeq 0, f{|B_n}\simeq 1\mbox{ for every } n\};\]
here $c=(c_0,c_1,c_2,\ldots)$ and $c_i=(A_i,B_i)$.

Let $\Ze$ be the set of all pairs
\[z=(z(0),z(1))\in [\omega^*]^{<\omega}\times [\omega^*]^{<\omega},\]
 such that
$z(0)\cap z(1)=\emptyset$. For $z,z'\in\Ze$ we write $z\sqsubset
z'$ to denote that  $z(0)\sub z'(0)$ and $z(1)\sub z'(1)$.

Basic open neighborhoods in $\E$ are of the form $N(\sigma,z,s)$,
where $\sigma\in\tlo$, $z\in \Ze$, $s\in\Se$, and $N(\sigma,z,s)$
is the set of all $(f,c)\in \E$ such that

\begin{itemize}
\item[(a)] $f(x)=i$ for every $x\in z(i)$, $i=0,1$; \item[(b)]
$\sigma\prec f$ and $s\prec c$.
\end{itemize}

Note that every set of the form $N(\sigma,z,s)$ is nonempty, since
$\wh{\mu}$ vanishes on singletons.

Let us say that $s\in\Se$ {\em captures} $z\in\Ze$ if, writing
$s=t\sfr (A,B)$, we have $z(0)\sub A^*$ and $z(1)\sub B^*$.

\begin{lem}\label{basic}
Every basic open set $N(\sigma,z,s)$ in $\E$ contains a
neighborhood $N(\sigma,z,s')$ where $s'$ captures $z$.
\end{lem}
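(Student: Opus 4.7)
The plan is to build a single new pair $(A, B)$ and set $s' = s \sfr (A, B)$. Since $s \prec s'$, the inclusion $N(\sigma, z, s') \sub N(\sigma, z, s)$ is automatic, so the task reduces to choosing $(A, B)$ so that the extended sequence lies in $\Se$ and so that $A \in \FF$ for every $\FF \in z(0)$ and $B \in \mathcal{G}$ for every $\mathcal{G} \in z(1)$; this is exactly what capture requires.

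Enumerate $z(0) = \{\FF_1, \ldots, \FF_m\}$ and $z(1) = \{\mathcal{G}_1, \ldots, \mathcal{G}_n\}$, and let $(A_{k-1}, B_{k-1})$ be the last pair of $s$. Since $\FF_i \neq \mathcal{G}_j$ for every pair $(i,j)$, choose $D_{ij} \in \FF_i$ with $\omega \sm D_{ij} \in \mathcal{G}_j$. Pick $\eps > 0$ small enough that $\ol d(A_{k-1}) + m\eps < 1/6$ and $\ol d(B_{k-1}) + n\eps < 1/6$, which is possible by the strict inequalities in the density condition on $s$. By Lemma \ref{d_small} choose $U_i \in \FF_i$ with $d(U_i) < \eps$, and then replace $U_i$ by $U_i \cap \bigcap_j D_{ij} \cap (\omega \sm B_{k-1})$; symmetrically produce $V_j \in \mathcal{G}_j$ of density less than $\eps$, refined to be disjoint from $A_{k-1}$ and from every $D_{ij}$. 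Finally set $A = A_{k-1} \cup \bigcup_i U_i$ and $B = B_{k-1} \cup \bigcup_j V_j$.

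The verification is routine. Every cross-term in $A \cap B$ vanishes, using $A_{k-1} \cap B_{k-1} = \emptyset$ from the original first condition, the built-in disjointness of the refined $U_i, V_j$ from $B_{k-1}, A_{k-1}$, and $U_i \cap V_j \sub D_{ij} \cap (\omega \sm D_{ij}) = \emptyset$; the bounds $\ol d(A), \ol d(B) < 1/6$ follow from subadditivity of $\ol d$ together with the choice of $\eps$; and the inclusions $U_i \sub A$, $V_j \sub B$ deliver $A \in \FF_i$, $B \in \mathcal{G}_j$, hence capture. The main obstacle in the plan is justifying the refinement step, which tacitly uses $B_{k-1} \notin \FF_i$ and $A_{k-1} \notin \mathcal{G}_j$ in order that intersecting with $\omega \sm B_{k-1}$ (resp.\ $\omega \sm A_{k-1}$) leaves the relevant ultrafilter. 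These compatibility conditions are anyhow necessary for any capturing extension to exist --- $A \in \FF_i$ together with $A \cap B = \emptyset$ and $B \supseteq B_{k-1}$ already forces $B_{k-1} \notin \FF_i$ --- so the essential point is to show that they are implied by the non-emptiness of $N(\sigma, z, s)$. This is precisely where the argument that $\wh{\mu}$ vanishes on singletons, invoked just after the definition of $\E$, does the real work.
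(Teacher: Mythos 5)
Your construction is essentially the paper's own proof: there, too, one appends a single pair to $s$, taking $A'=A\cup C$ and $B'=B\cup D$ where $C$ and $D$ are unions of small-density sets supplied by Lemma \ref{d_small}, one for each ultrafilter in $z(0)$ resp.\ $z(1)$, with $\eps$ chosen small enough that the outer densities stay below $1/6$. You are in fact more careful than the paper, whose two-line proof does not address the requirement $A'\cap B'=\emptyset$ at all; your $D_{ij}$-refinement correctly kills the cross-terms $U_i\cap V_j$ using $\FF_i\neq\mathcal{G}_j$.

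The one point you leave open is exactly the point the paper is silent on, but your proposed way of closing it does not work as stated. You need $B_{k-1}\notin\FF_i$ and $A_{k-1}\notin\mathcal{G}_j$ in order to intersect $U_i$ with $\omega\sm B_{k-1}$ (resp.\ $V_j$ with $\omega\sm A_{k-1}$) without leaving the ultrafilter, and you claim this follows from the non-emptiness of $N(\sigma,z,s)$ via the fact that $\wh{\mu}$ vanishes on singletons. It does not: membership in $\E$ only demands $f\simeq 1$ on $B_{k-1}^*$ $\wh{\mu}$-almost everywhere, so a continuous $0$-$1$-valued $f=\chi_{\overline{E}}$ may take the value $0$ at a point $\FF_i\in B_{k-1}^*$ as long as $B_{k-1}\sm E$ is a $\mu$-null member of $\FF_i$; such ultrafilters exist (any $\FF_i$ containing a density-zero set), so $N(\sigma,z,s)$ can be nonempty while $B_{k-1}\in\FF_i$, in which case, as you yourself observe, no capturing extension exists and the lemma fails. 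The honest reading is that the lemma tacitly assumes the compatibility of $z$ with $s$ --- precisely the notion of a \emph{consistent} pair made explicit in Section \ref{s4} --- and the paper's proof makes the same tacit assumption. So your write-up matches the paper's argument and is, if anything, more complete; just do not present the compatibility conditions as a consequence of non-emptiness, but rather as a standing hypothesis on the basic neighbourhoods under consideration.
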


\begin{proof}
Indeed, if $(A,B)$ is the final pair in $s$ then for any $\eps>0$,
using  Lemma \ref{d_small} we can find sets $C, D\sub\omega$ of
asymptotic density $<\eps$ and such that $z(0)\sub C^*$, $z(1)\sub
D^*$. Then we can put $s'=s\sfr (A',B')$, where $A'=A\cup C$,
$B'=B\cup D$ and $\eps$ is small enough.
\end{proof}

\begin{lem}\label{claim_for_lemma}
Let  $N(\sigma,z,s)$ be a basic open set  in $\E$, where
$\sigma\in 2^l$. If $G$ is a dense open subset of $N(\sigma, z,s)$
then, for every $k\ge l$, there are $m>k$, $z'\in\Ze$ with
$z\sqsubset z'$, $s'\in\Se$ with $s\prec s'$, and a function
\[\vf:I=\{i: k\le i <m\}\to 2,\]
such that for every $\tau\in 2^{k-l}$
\[N(\sigma\sfr\tau\sfr \vf,z',s')\sub G.\]
\end{lem}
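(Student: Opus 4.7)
The plan is to enumerate the $2^{k-l}$ sequences in $2^{k-l}$ as $\tau_1,\dots,\tau_N$ (with $N=2^{k-l}$) and refine the basic open neighborhood once per $\tau_i$, while growing a single common tail $\vf$ cumulatively. The central observation is that any further refinement of the tail automatically stays inside every previously chosen refinement, so one $\vf$ built in this fashion will work uniformly for every $\tau_i$.

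Explicitly, I would start with $\vf_0=\emptyset$, $z_0=z$, $s_0=s$, and at step $i\ge 1$ consider the basic open set $N(\sigma\sfr\tau_i\sfr\vf_{i-1},z_{i-1},s_{i-1})$. This is a nonempty subset of $N(\sigma,z,s)$: the inclusion holds because $\sigma\sfr\tau_i\sfr\vf_{i-1}\succ\sigma$, $z_{i-1}\sqsupset z$ and $s_{i-1}\succ s$, and nonemptiness is the observation recorded just before Lemma~\ref{basic}. Since $G$ is dense open in $N(\sigma,z,s)$, I can pick a basic neighborhood $N(\sigma_i^{*},z_i,s_i)\sub G$ inside it. Necessarily $\sigma_i^{*}$ extends $\sigma\sfr\tau_i\sfr\vf_{i-1}$, so I write $\sigma_i^{*}=\sigma\sfr\tau_i\sfr\vf_{i-1}\sfr\psi_i$ and set $\vf_i=\vf_{i-1}\sfr\psi_i$. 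The monotonicity clauses defining $\Ze$ and $\Ce$ are preserved along the way because each $(z_i,s_i)$ was supplied by the definition of a basic open set in $\E$.

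Finally, I would take $\vf=\vf_N$, $z'=z_N$, $s'=s_N$, and $m=k+|\dom(\vf_N)|$ (padding $\vf$ by a single value if needed, to ensure $m>k$). For each $i\le N$, since $\vf\succ\vf_i$, $z'\sqsupset z_i$ and $s'\succ s_i$, the inclusion $N(\sigma\sfr\tau_i\sfr\vf,z',s')\sub N(\sigma_i^{*},z_i,s_i)\sub G$ follows directly from the definition of the basic open sets: $\sigma\sfr\tau_i\sfr\vf$ extends $\sigma_i^{*}=\sigma\sfr\tau_i\sfr\vf_i$ because $\vf\succ\vf_i$, and the constraints on $z_i$, $s_i$ are weakenings of those on $z'$, $s'$. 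No deep obstacle arises; the only slightly delicate point is the realization that the \emph{same} $\vf$ must serve every $\tau_i$, which forces the cumulative construction above rather than handling the $\tau_i$ independently.
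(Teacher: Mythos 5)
Your proposal is correct and follows essentially the same route as the paper: process the finitely many $\tau\in 2^{k-l}$ one at a time, each time shrinking to a basic neighborhood inside $G$ and cumulatively extending the tail $\vf$, the pair $z$, and the sequence $s$, so that the final $\vf,z',s'$ work for every $\tau$ by monotonicity of the basic neighborhoods. The only cosmetic quibble is the word ``necessarily'' --- the basic neighborhood inside $G$ is not forced to have $\sigma_i^*\succ\sigma\sfr\tau_i\sfr\vf_{i-1}$ and $z_i\sqsupset z_{i-1}$, $s_i\succ s_{i-1}$, but one may always refine it so that it does, which is what the paper implicitly does as well.
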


\begin{proof}
Given $\tau_0\in 2^{k-l}$, we have $N(\sigma\sfr\tau_0,z,s)\cap
G\neq\emptyset$ so for some interval $I_1=\{i: k\le i<m_1\}$ and
$\vf_1: I_1\to 2$ there are $z_1 \sqsupset z$ and $s_1\succ s$
such that
\[ N(\sigma\sfr\tau_0\sfr\vf_1,z_1,s_1)\sub G.\]
Take another $\tau_1\in 2^{k-l}$. Apply the same argument for
$N(\sigma\sfr\tau_1\sfr\vf_1,z_1,s_1)$. It is clear that we arrive
at the conclusion after examining all $\tau\in 2^{k-l}$.
\end{proof}

\begin{lem}\label{main_lemma}
Let $(G_n)_{n\in\omega}$ be a decreasing sequence of open subsets
of $\E$ such that $G_0\neq\emptyset$ and every $G_n$ is dense in
$G_0$. Then there exist a sequence
$c=((A_n,B_n))_{n\in\omega}\in\Ce$,  sets $A,B\subseteq\omega$,
countable sets $Z(0),Z(1)\sub\omega^*$, and a sequence
$\vf_n:I_n\to 2$ of functions defined on pairwise disjoint finite
sets $I_n\sub\omega$ such that

\begin{itemize}
\item[(i)]$\bigcup_{n\in\omega}A_n^*\sub A^*$,
$\bigcup_{n\in\omega}B_n^*\sub B^*$, $A\cap B=\emptyset$;
\item[(ii)] ${\mu}(A), {\mu}(B)\le\frac16$;
\item[(iii)] $Z(0)\sub A^*$, $Z(1)\sub B^*$;
\item[(iv)] for every
$f\in C_p(\beta\omega,2)$ satisfying
\begin{itemize}
\item $f|A\simeq 0$, $f|B\simeq 1$,
\item $f|Z(i)=i$ for $i=0,1$,
\item $f|I_0=\vf_0$,
\item $f|I_n=\vf_n$ for infinitely many $n\ge
1$,
\end{itemize}
we have $(f,c)\in\bigcap_{n\in\omega}G_n$.
\end{itemize}
\end{lem}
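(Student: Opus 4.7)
My plan is a recursive fusion-style construction that keeps the stem of a single basic open set \emph{fixed} throughout, in order to exploit the ``for every $\tau$'' flexibility of Lemma~\ref{claim_for_lemma}; this is precisely what allows the weaker ``infinitely many matches'' hypothesis of (iv) to suffice.

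First I would fix a nonempty basic open $N_0=N(\sigma_0,z_0,s_0)\subseteq G_0$, refined via Lemma~\ref{basic} so that $s_0$ captures $z_0$; set $l_0:=|\sigma_0|$, $I_0:=[0,l_0)$, and $\varphi_0:=\sigma_0$, so that $f\succ\sigma_0$ becomes $f|I_0=\varphi_0$. Inductively, given $N_n=N(\sigma_0,z_n,s_n)\subseteq N_0$ with $s_n$ capturing $z_n$ and pairwise disjoint $I_0,\dots,I_n$ with $\varphi_j$ already produced, the set $G_{n+1}\cap N_n$ is dense open in $N_n$ (since $G_{n+1}$ is dense in $G_0\supseteq N_n$), so I invoke Lemma~\ref{claim_for_lemma} with $\sigma=\sigma_0$ and $k=k_{n+1}$ chosen strictly past $l_0$ and all of $I_0,\dots,I_n$. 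This yields $m_{n+1}>k_{n+1}$, $\varphi_{n+1}\colon I_{n+1}=[k_{n+1},m_{n+1})\to 2$, $z_{n+1}\sqsupset z_n$, and $s_{n+1}'\succ s_n$ with $N(\sigma_0\sfr\tau\sfr\varphi_{n+1},z_{n+1},s_{n+1}')\subseteq G_{n+1}$ for every $\tau\in 2^{k_{n+1}-l_0}$. Then Lemma~\ref{basic} refines $s_{n+1}'$ to $s_{n+1}$ capturing $z_{n+1}$ (this only shrinks the neighborhood, so the inclusion is preserved); I set $N_{n+1}:=N(\sigma_0,z_{n+1},s_{n+1})\subseteq N_n$.

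Taking limits, $c:=\bigcup_n s_n=((A_n,B_n))_n\in\Ce$ and $Z(i):=\bigcup_n z_n(i)\subseteq\omega^*$ is countable. Applying Lemma~\ref{small_covering} with $\varepsilon=1/6$ to each of the increasing sequences $(A_n)$ and $(B_n)$ gives $A,B\subseteq\omega$ with $A_n^*\subseteq A^*$, $B_n^*\subseteq B^*$, and $\mu(A),\mu(B)\le 1/6$ (via the remark after Lemma~\ref{small_covering}), yielding (i) and (ii); disjointness $A\cap B=\emptyset$ follows from $A_n\cap B_n=\emptyset$ plus the nesting. Condition (iii) comes from capture: each $z_n(i)$ lies in the final $A^*$- or $B^*$-component of $s_n$, hence in $A^*$ or $B^*$ respectively.

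The main task is (iv). For $n=0$, the conditions $f|I_0=\varphi_0$ (giving $f\succ\sigma_0$), $f|Z(i)=i$, $c\succ s_0$, and $f|A_k\simeq 0,\ f|B_k\simeq 1$ for all $k$ (inherited from $f|A\simeq 0$, $f|B\simeq 1$ together with $A_k^*\subseteq A^*$) place $(f,c)\in N_0\subseteq G_0$. For $n\ge 1$, the infinitely-many-matches hypothesis furnishes some $n'\ge n$ with $f|I_{n'}=\varphi_{n'}$; with $\tau:=f|[l_0,k_{n'})$, the defining conditions of $N(\sigma_0\sfr\tau\sfr\varphi_{n'},z_{n'+1},s_{n'+1})$ are all met by $(f,c)$, placing it in $G_{n'+1}\subseteq G_n$. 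The delicate design choice is the fixed stem: only a \emph{single} match at some $n'\ge n$ is needed, whereas extending the stem through prior $\varphi_j$'s would have forced simultaneous agreement at all $j<n'$, invalidating the weaker hypothesis of (iv).
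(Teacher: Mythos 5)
Your proposal is correct and is essentially the paper's own proof: the same fixed-stem fusion using Lemma~\ref{claim_for_lemma} with $l=|\sigma_0|$ and $k=k_n$ at each stage, refined by Lemma~\ref{basic} so that $s_n$ captures $z_n$, with $A,B$ obtained from Lemma~\ref{small_covering}, and the key observation that a single match $f|I_{n'}=\vf_{n'}$ with $n'\ge n$ already places $(f,c)$ in $G_{n'}\subseteq G_n$. The only blemish is an off-by-one slip in the final step, where the witnessing neighborhood should be $N(\sigma_0\sfr\tau\sfr\vf_{n'},z_{n'},s_{n'}')\subseteq G_{n'}$ rather than the $(n'+1)$-indexed one; this does not affect the argument.
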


\begin{proof}
Fix a basic neighborhood $N(\sigma_0,z_0,s_0)\sub G_0$; by Lemma
\ref{basic} we can assume that $s_0$ captures $z_0$. Take $k_0$
such that $\sigma\in 2^{k_0}$, set $I_0=\{0,\ldots, k_0-1\}$ and
$\vf_0=\sigma_0$.

We shall  define inductively natural numbers $k_0<k_1<k_2<\ldots$,
functions $\vf_n:I_n=\{i: k_{n-1}\le i< k_{n}\} \to 2$, pairs
$z_n\in\Ze$ with $z_0\sqsubset z_1\sqsubset\ldots$, and sequences
$s_0\prec s_1\prec\ldots$ in $\Se$ such that for every $n\ge 0$

\begin{itemize}
\item[--] $s_n$ captures $z_n$;
\item[--] for every $\tau\in 2^{k_n-k_0}$ we have
$N(\vf_0\sfr\tau\sfr\vf_{n+1},z_{n+1},s_{n+1})\sub G_{n+1}$.
\end{itemize}

Having $k_n,\vf_n,\ldots$ defined, we make the inductive step
using Lemma \ref{claim_for_lemma} for the neighborhood
$N(\vf_0,z_n,s_n)$ with $G=G_{n+1}\cap N(\vf_0,z_n,s_n)$, $l=k_0$,
and $k=k_n$ and we use $m$, $z'$, and $s'$ given by this lemma to
define $k_{n+1}$, $z_{n+1}$, and $s_{n+1}$. We complete our choice
applying Lemma \ref{basic}.

The sequence $s_n\in\Se $ defines the unique element
$c=((A_n,B_n))_{n\in\omega}\in\Ce$; we take $A,B$ applying Lemma
\ref{small_covering} to sequences $(A_n)_n$ and $(B_n)_n$ (see
also the remark following the proof of Lemma
\ref{small_covering}). We put $Z(0)=\bigcup_n z_n(0)$ and
$Z(1)=\bigcup_n z_n(1)$; note that $Z(0)\sub A^*$ and $Z(1)\sub
B^*$.

Now, if $f$ satisfies (iv) then $(f,c)\in G_n$, for infinitely
many $n$, so $(f,c)\in\bigcap_n G_n$.
\end{proof}

\begin{thm}\label{main_thm}
The measure $\wh{\mu}$ is not measurable with respect to the
pointwise Borel sets in $C(\beta\omega)$. In particular,
\[Borel(C(\beta\omega),\tp)\neq Borel(C(\beta\omega), weak).\]
\end{thm}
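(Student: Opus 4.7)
Suppose, for contradiction, that $\wh\mu$ is pointwise Borel measurable on $C(\beta\omega)$. The projection $(f,c)\mapsto f$ continuously maps $\E$ into $C_p(\beta\omega,2)\subseteq C_p(\beta\omega)$, so the function $\Phi\colon\E\to[0,1]$, $\Phi(f,c)=\wh\mu(f)$, is Borel. The plan is to derive a contradiction by combining Lemma~\ref{main_lemma}, read as a weak Baire category theorem for $\E$, with a 0--1 argument in the spirit of Proposition~\ref{BP_nonmeasurable}, and to close with a "three disjoint sets" computation.

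The first, localization, step is to pin down a generic value for $\Phi$. Using Borelness of $\Phi$, the weak Baire principle provided by Lemma~\ref{main_lemma}, and the natural involution of $\E$ sending $(f,c)\mapsto(1-f,c^\sharp)$, where $c^\sharp$ swaps the coordinates $A_n$ and $B_n$ of $c$ (an operation that conjugates $\Phi$ to $1-\Phi$ while preserving both the topology of $\E$ and its defining conditions), I would show that in some basic neighborhood $N\subseteq\E$ the Borel sets
\[
G_n:=\Phi^{-1}\!\bigl((1/2-1/n,\,1/2+1/n)\bigr)\cap N
\]
form a decreasing sequence of open subsets of $N$, each dense in $N$; passing to a neighborhood invariant under the involution, the generic value in $N$ is forced to equal $1/2$.

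Applying Lemma~\ref{main_lemma} to $(G_n)$ produces $c=((A_n,B_n))\in\Ce$, sets $A,B\subseteq\omega$ with $\mu(A),\mu(B)\le 1/6$, countable $Z(0),Z(1)\subseteq\omega^*$, and $\vf_n\colon I_n\to 2$ on pairwise disjoint finite sets covering $\omega$; arranging the induction so that each $\vf_n$ vanishes on $I_n\cap A$ and equals $1$ on $I_n\cap B$ is possible because $A$, $B$, and the $\vf_n$'s are all built along the same induction. Partition $\omega\setminus\{0\}$ into three infinite sets $N_1,N_2,N_3$, and for each $i$ let $f_i\in C(\beta\omega,2)$ be the characteristic function of
\[
A_{f_i}=B\cup\bigl\{k\in I_n\setminus(A\cup B):\, n\in N_i\cup\{0\},\ \vf_n(k)=1\bigr\}.
\]
One verifies that $f_i$ fulfils all the hypotheses of Lemma~\ref{main_lemma}(iv) (with $N_i\cup\{0\}$ serving as the set of indices $n$ where $f_i|I_n=\vf_n$), so $(f_i,c)\in\bigcap_n G_n$ and hence $\wh\mu(f_i)=1/2$. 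But the sets $A_{f_i}\setminus B$ are pairwise disjoint subsets of $\omega\setminus B$, each of $\mu$-measure $1/2-\mu(B)$, so
\[
3\bigl(1/2-\mu(B)\bigr)\le \mu(\omega\setminus B)=1-\mu(B),
\]
giving $\mu(B)\ge 1/4$, which contradicts $\mu(B)\le 1/6$.

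The main obstacle is the localization step: $\E$ is not a Polish space, so standard tail 0--1 laws are unavailable, and the 0--1 principle must be squeezed out of Lemma~\ref{main_lemma} together with the involution above (and possibly richer combinatorial symmetries of $\E$) by hand. A secondary technicality is arranging the $\vf_n$'s to be compatible with $A$ and $B$ on $I_n$, but this is handled inside the inductive construction of Lemma~\ref{main_lemma}.
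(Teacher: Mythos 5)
Your overall architecture (push the problem into $\E$, use the Baire property there, apply Lemma \ref{main_lemma}, and contradict $\wh\mu$ on the functions forced into $\bigcap_n G_n$) matches the paper's, but two steps are genuinely missing. First, the localization to the value $1/2$ is not established and is harder than you suggest: $\Phi$ is only Borel, so the sets $\Phi^{-1}((1/2-1/n,1/2+1/n))$ are not open as you claim, and --- more seriously --- the single involution $(f,c)\mapsto(1-f,c^\sharp)$ only exhibits a symmetry between $\{\Phi<t\}$ and $\{\Phi>1-t\}$; it does not yield the $0$--$1$ law (``each $\Phi^{-1}(S)$ is meager or comeager in $N$'') that your argument needs, and no analogue of the tail-set $0$--$1$ law from Proposition \ref{BP_nonmeasurable} is proved for the nonmetrizable space $\E$. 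You flag this as the main obstacle but do not overcome it. The paper avoids it entirely: it splits $C_p(\beta\omega,2)$ into $F_0=\{f:\int f\,{\rm d}\wh\mu<1/2\}$ and its complement $F_1$, notes that one of $\pi^{-1}(F_0)$, $\pi^{-1}(F_1)$ must be comeager in some nonempty open subset of $\E$, and then shows that $\bigcap_n G_n$ contains pairs $(f,c)$ with $\int f\,{\rm d}\wh\mu$ on both sides of $1/2$; no knowledge of a ``generic value'' is needed, because after fixing $f$ on $A'\cup B'\cup I_R$ it remains free on a set $D$ with $\mu(D)\ge 2/3>1/2$.

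Second, your claim that one can ``arrange the induction so that each $\vf_n$ vanishes on $I_n\cap A$ and equals $1$ on $I_n\cap B$'' is unjustified: in the proof of Lemma \ref{main_lemma} the functions $\vf_n$ are dictated by the dense open sets $G_n$ (they are the $\sigma$-parts of basic neighborhoods that $G_n$ happens to contain), not chosen freely, and $A$, $B$ are only assembled from all the $A_m$, $B_m$ at the very end. Without this compatibility your $f_i$ need not satisfy $f_i|A\simeq 0$ or $f_i|B\simeq 1$, and the failure is not negligible: the $I_n$ cover $\omega$, so for at least one $i$ the set $\bigcup_{n\in N_i}I_n$ has $\mu$-measure at least $1/3$. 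The paper resolves exactly this point with its one additional idea, which your proposal lacks: take an uncountable almost disjoint family $\RR$, note that $\{I_R:R\in\RR\}$ is almost disjoint, and choose $R$ with $\mu(I_R)=0$ and $I_R^*$ disjoint from $Z(0)\cup Z(1)$; then the values of the $\vf_n$, $n\in R$, on $A\cup B$ are $\wh\mu$-negligible and cause no conflict with $f|A\simeq 0$, $f|B\simeq 1$. Both devices need to be incorporated; as written the proposal does not close.
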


\begin{proof}
Suppose otherwise; then
\[F_0=\{f\in C_p(\beta\omega,2)\colon \int f\dx\wh{\mu}<1/2\},\]
is pointwise Borel in $C_p(\beta\omega,2)$. Let
$F_1=C_p(\beta\omega,2)\sm F_0$.

Let $\pi:\E\to C_p(\beta\omega,2)$ denote the projection onto the
first axis. It follows that the sets $\pi^{-1}(F_i)$ are Borel in
$\E$, so both $\pi^{-1}(F_0)$ and $\pi^{-1}(F_1)$ have the Baire
property in $\E$. Therefore, for some $i\in \{0,1\}$, there is a
decreasing sequence $(G_n)_n$ of open sets in $\E$, where
$G_0\neq\emptyset$, every $G_n$ is dense in $G_0$ and $\bigcap_n
G_n\sub \pi^{-1}(F_i)$. Take $c\in\Ce$, $A,B\sub\omega$,
$Z(0),Z(1)$, $\vf_n:I_n\to 2$ as in Lemma \ref{main_lemma}.

Let $\RR$ be an uncountable almost disjoint family of infinite subsets of $\omega$.
For $R\in\RR$ let
\[I_R=I_0\cup\bigcup_{n\in R}I_n;\]
then the family $\{I_R:R\in\RR\}$ is almost disjoint too.
Therefore there is $R\in\RR$ such that

\begin{itemize}
\item[--] $(Z(0)\cup Z(1))\cap I_R^*=\emptyset$, and
\item[--] $\mu(I_R)=0$.
\end{itemize}

Set $A'=A\sm I_R$, $B'=B\sm I_R$. Take any function $f\in
C(\beta\omega,2)$ such that $f\equiv 0$ on $A'$, $f\equiv 1$ on
$B'$ and $f$ is defined on $I_R$ so that $f|I_n=\vf_n$ for $n\in
R\cup\{0\}$. Then $f\equiv 0$ on $Z(0)$ and $f\equiv 1$ on $Z(1)$,
$f|A\simeq 0$, $f|B\simeq 1$. It follows from Lemma
\ref{main_lemma} that $(f,c)\in \bigcap_n G_n\sub \pi^{-1}(F_i)$.

On the other hand,
$f$ can be freely defined on the set
\[D=\omega\sm (A'\cup B'\cup I_R),\]
 where $\mu(D)\ge 2/3$, so
$\int f\;{\rm d}\wh{\mu}$ can take values less than $1/2$ and
greater than $1/2$, a contradiction.

\end{proof}

\section{On $C$-sets in $(C(\omega^*),\tp)$}\label{s4}

Let us recall that in a topological space $X$, the elements of the
smallest $\sigma$-algebra in $X$ containing open sets and closed
under the Souslin operation are called $C$-sets, cf.\ \cite{Ke}.
The $C$-sets are open modulo meager sets and any preimage of a
$C$-set under a continuous map is a $C$-set.

\begin{thm}\label{no_count_sep}
No countable family of $C$-sets separates the functions in the
space $(C(\omega^*),\tp)$.
\end{thm}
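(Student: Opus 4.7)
Suppose for contradiction that $\{C_n\}_{n\in\omega}$ is a countable family of pointwise $C$-sets separating $(C(\omega^*),\tp)$. First I would restrict attention to the subspace $C_p(\omega^*,2)$: the traces $C_n\cap C_p(\omega^*,2)$ remain $C$-sets and still separate it. Then I would pull back through the continuous surjection $r\colon C_p(\beta\omega,2)\to C_p(\omega^*,2)$, $r(f)=f|\omega^*$, and through the projection $\pi\colon\E\to C_p(\beta\omega,2)$ from Section \ref{s3}, to obtain $C$-sets $\tilde D_n=\pi^{-1}(r^{-1}(C_n))$ in $\E$. These sets are invariant under finite modifications of the underlying subset of $\omega$ in the first coordinate, and they separate the corresponding $=^*$-equivalence classes of elements of $C_p(\beta\omega,2)$.

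Next I would run the Baire-category step of the proof of Theorem \ref{main_thm} simultaneously for the whole countable family $\{\tilde D_n\}$. Each $\tilde D_n$ has the Baire property in $\E$, so $\tilde D_n\triangle V_n$ is meager for some open $V_n\sub\E$. By iterating the standard dichotomy for sets with the Baire property---adapting the fusion argument implicit in Section \ref{s3}---I aim to produce a nonempty basic open $G_0\sub\E$, a pattern $\epsilon\in 2^\omega$, and a decreasing sequence $(G_n)_{n\in\omega}$ of open sets, each dense in $G_0$, such that $\bigcap_n G_n\sub V_n^{\epsilon_n}$ for every $n$ (with $V_n^1=V_n$ and $V_n^0=\E\sm\ol{V_n}$), and $\bigcap_n G_n$ is disjoint from the meager set $\bigcup_n(\tilde D_n\triangle V_n)$. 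Every $(f,c)\in\bigcap_n G_n$ then shares the same $\tilde D_n$-membership pattern $\epsilon$.

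Next, apply Lemma \ref{main_lemma} to $(G_n)$ to obtain $c\in\Ce$, $A,B\sub\omega$, countable $Z(0),Z(1)\sub\omega^*$, and $\vf_n\colon I_n\to 2$ satisfying (i)--(iv). As in the proof of Theorem \ref{main_thm} I would fix an uncountable almost disjoint family $\RR$ of infinite subsets of $\omega$ and choose $R\in\RR$ with $(Z(0)\cup Z(1))\cap I_R^*=\emptyset$ and $\mu(I_R)=0$; set $A'=A\sm I_R$, $B'=B\sm I_R$, $D=\omega\sm(A'\cup B'\cup I_R)$, so $\mu(D)\ge 2/3$ and $D$ is infinite. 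I would then partition $D$ into infinite blocks $(D_k)_{k\in\omega}$ and, using a further uncountable almost disjoint family of infinite subsets of $\omega$ indexing these blocks, build continuum many $f_\alpha=\chi_{\ol{S_\alpha}}$ satisfying condition (iv), with the $S_\alpha$ pairwise $=^*$-inequivalent; the modifications of a base function on $D$ do not affect the conditions on $A,B,Z(0),Z(1)$, nor on $I_n$ for $n\in R\cup\{0\}$.

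All the pairs $(f_\alpha,c)$ lie in $\bigcap_n G_n$, so they share the pattern $\epsilon$, and consequently $r(f_\alpha)=\chi_{S_\alpha^*}$ belong to exactly the same sets $C_n$. By the assumed separation this forces $S_\alpha=^*S_{\alpha'}$ for all $\alpha,\alpha'$, contradicting the pairwise $=^*$-inequivalence. The principal obstacle, as I see it, is the Baire-category construction in the second paragraph: stabilising the pattern simultaneously for countably many open sets in the space $\E$, which has no countable base. The fusion has to be carried out carefully on the basic-open parameters $(\sigma,z,s)$ so that each $G_n$ remains open dense in a single $G_0$ while $\bigcap_n G_n$ is forced into $V_n^{\epsilon_n}$ for every $n$.
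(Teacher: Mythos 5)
Your overall strategy (reduce to $0$\,-$1$-valued functions, pull back to a product with $\Ce$, run a Baire-category fusion, and exhibit two undecided functions) is the one the paper follows, but the pivotal second paragraph contains a genuine gap. You ask for a single nonempty open $G_0\sub\E$ and a decreasing sequence $(G_n)$ of sets \emph{dense open in $G_0$} with $\bigcap_n G_n\sub V_n^{\epsilon_n}$ for every $n$. Such a simultaneous stabilization is impossible for a general countable family of sets with the Baire property: already for a sequence of nonempty clopen sets in a perfect space (say $\tilde D_n=\{x: x(n)=0\}$ in $2^\omega$), any countable intersection of dense open subsets of a nonempty open $G_0$ meets both $\tilde D_n$ and its complement for all large $n$, so no pattern $\epsilon$ can be forced. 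The one-set version used in Theorem \ref{main_thm} works precisely because a single Baire-property set is meager or comeager in \emph{some} nonempty open set; for countably many sets the witnessing open set must be allowed to shrink at each stage, and then the later dense open sets are dense only in the shrinking $V_n$, not in $G_0$ --- so Lemma \ref{main_lemma}, which requires every $G_n$ to be dense in $G_0$, cannot be invoked. One might try to rescue your claim using the invariance of $\tilde D_n=\pi^{-1}(r^{-1}(C_n))$ under changes of $f$ on $\omega$ (so that the open kernel of $\tilde D_n$ is insensitive to the $\sigma$-coordinate of a basic neighborhood), but you neither state nor prove such a homogeneity argument, and you flag this step yourself as unresolved.

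The paper sidesteps the difficulty by working not in $\E$ but in the space $\F\sub C_p(\omega^*,2)\times\Ce$, whose basic neighborhoods $O(z,s)$ have no finite $\sigma$-part and whose defining conditions $f|A_n^*\equiv 0$, $f|B_n^*\equiv 1$ are exact rather than $\wh\mu$-a.e. There one performs an interleaved fusion (Lemma \ref{main_mod}): at stage $n$ one chooses a nonempty open $V_n\sub O(z_{n-1},s_{n-1})$ and dense open $U^n_k\sub V_n$ with $\bigcap_k U^n_k$ either contained in or disjoint from $X_n$, and then picks $O(z_n,s_n)\sub\bigcap_{i,k\le n}U^i_k$ with $s_n$ capturing $z_n$. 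No density in a fixed $G_0$ is needed, because the concrete limit set $\{f: f|A^*\equiv 0,\ f|B^*\equiv 1\}\times\{c\}$ is contained in every $O(z_n,s_n)$ and hence in every $\bigcap_k U^n_k$; since $\mu(A),\mu(B)\le 1/6$, the set $\omega\sm(A\cup B)$ is infinite and this limit set contains at least two functions, which the $X_n$ then fail to separate. If you want to keep your route through $\E$, you would have to prove an analogue of Lemma \ref{main_mod} there rather than quote Lemma \ref{main_lemma}, which amounts to redoing the paper's argument in a less convenient space.
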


\begin{rem}\label{count_sep}
From the fact that $\beta\omega$ is separable, it follows easily
that $(C(\beta\omega),\tp)$ contains a countable family of open
sets separating functions.
\end{rem}

The above properties of function spaces imply immediately the
following

\begin{cor}\label{no_inject}  There is no
Borel-measurable injection $\varphi: (C(\omega^*), \tp)\to
(C(\beta \omega ), \tp)$.
\end{cor}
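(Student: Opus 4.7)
The plan is to combine Theorem \ref{no_count_sep} with Remark \ref{count_sep} via a straightforward pullback argument. Suppose, toward a contradiction, that a Borel-measurable injection $\varphi\colon (C(\omega^*),\tp)\to (C(\beta\omega),\tp)$ exists.

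First, I would make explicit the countable separating family promised by Remark \ref{count_sep}. Since $\omega$ is dense in $\beta\omega$, the countable collection
\[\mathcal{U}=\{\{g\in C(\beta\omega)\colon g(n)\in (q,r)\}: n\in\omega,\ q,r\in\mathbb{Q},\ q<r\}\]
consists of pointwise open subsets of $C(\beta\omega)$, and it separates functions: if $g_1\neq g_2$ in $C(\beta\omega)$, then $g_1-g_2$ is nonzero at some point of $\beta\omega$, hence, by continuity and density of $\omega$, at some $n\in\omega$, and a small enough rational interval around $g_1(n)$ separates $g_1$ from $g_2$.

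Next, I would pull back $\mathcal{U}$ via $\varphi$. Each $U\in\mathcal{U}$ is pointwise open, hence Borel in $(C(\beta\omega),\tp)$, so $\varphi^{-1}(U)$ is a Borel (hence $C$-set) subset of $(C(\omega^*),\tp)$, using that the family of $C$-sets is a $\sigma$-algebra containing the open sets. Injectivity of $\varphi$ transfers the separation property: given $f_1\neq f_2$ in $C(\omega^*)$, we have $\varphi(f_1)\neq\varphi(f_2)$, so some $U\in\mathcal{U}$ separates them, whence $\varphi^{-1}(U)$ separates $f_1$ from $f_2$. Thus $\{\varphi^{-1}(U):U\in\mathcal{U}\}$ is a countable family of $C$-sets in $(C(\omega^*),\tp)$ separating functions, directly contradicting Theorem \ref{no_count_sep}.

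There is no real obstacle here once Theorem \ref{no_count_sep} is in hand; the only point to verify carefully is that Borel sets in a topological space are $C$-sets (immediate from the definition, since $C$-sets form a $\sigma$-algebra containing the open sets) and that a Borel-measurable map pulls Borel sets back to Borel sets. Thus the entire argument is essentially a one-line corollary of the two preceding results.
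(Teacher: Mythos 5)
Your argument is correct and is exactly the ``immediate'' implication the paper has in mind: pull back the countable separating family of pointwise open sets in $C(\beta\omega)$ (Remark \ref{count_sep}) through the Borel injection to obtain a countable separating family of Borel, hence $C$-, sets in $(C(\omega^*),\tp)$, contradicting Theorem \ref{no_count_sep}. No gaps; this matches the paper's approach.
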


We keep here a part of the notation introduced in Section
\ref{s3}; in particular, we will use the space $\Ce$ and the sets
$\Se$, $\Ze$, defined in that section.

By $C_p(\omega^*,2)$ we denote the subspace of $(C(\omega^*),\tp)$
consisting of $0$\,-$1$-valued functions. The role of the space
$\E$ from the previous section will be played by the following
space
\[ \F=\{(f,c)\in C_p(\omega^*,2)\times \Ce: f|A_n^*\equiv 0,\;
f|B_n^*\equiv 1 \mbox{ for every } n\};\]
where $c=(c_0,c_1,c_2,\ldots)$ and $c_i=(A_i,B_i)$.

We will say that a pair $z\in \Ze$ and a sequence
$s=((A_0,B_0),\ldots,(A_n,B_n))\in \Se$ are consistent if
$z(0)\cap B_n^* =\emptyset = z(1)\cap A_n^*$. Clearly, if $s$
captures $z$ (see Sec.\ \ref{s3}), then $s$ and $z$ are
consistent.

Basic open neighborhoods in $\F$ are of the form $O(z,s)$, where
$z\in \Ze$ and $s\in\Se$ are consistent, and $O(z,s)$ is the set
of all $(f,c)\in \F$ such that

\begin{itemize}
\item[(a)] $f(x)=i$ for every $x\in z(i)$, $i=0,1$; \item[(b)]
$s\prec c$.
\end{itemize}

Note that the condition that $s$ and $z$ are consistent implies
that every set $O(z,s)$ is nonempty.

Repeating the proof of Lemma \ref{basic}, one easily obtains the
following

\begin{lem}\label{basic^*}
For any consistent $z\in \Ze$ and $s\in\Se$, the basic open set
$O(z,s)$ in $\F$ contains a neighborhood $O(z,s')$, where $s'$
captures $z$.
\end{lem}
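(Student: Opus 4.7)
The plan is to follow the template of Lemma \ref{basic}, extending the given sequence $s = ((A_0,B_0),\ldots,(A_n,B_n))$ by one additional pair $(A',B')$, setting $s' = s \sfr (A',B')$ with $A' = A_n \cup C$ and $B' = B_n \cup D$ for suitably chosen $C, D \subseteq \omega$ of small upper density satisfying $z(0) \subseteq C^*$ and $z(1) \subseteq D^*$. The new wrinkle compared with Section \ref{s3} is that the definition of $\F$ requires $f \equiv 0$ on every point of $A_n^*$ and $f \equiv 1$ on every point of $B_n^*$ (not only $\wh{\mu}$-almost everywhere), so we must arrange exact disjointness $A' \cap B' = \emptyset$ and, moreover, we cannot afford any point of $z(0)$ to accidentally land in $B'^*$ or any point of $z(1)$ in $A'^*$. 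The consistency hypothesis is precisely what will make this possible.

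The main construction goes as follows. For each ultrafilter $\FF \in z(0)$, the consistency hypothesis $z(0) \cap B_n^* = \emptyset$ gives $\omega \sm B_n \in \FF$, and since $\FF$ differs from each $\mathcal{G} \in z(1)$ we may pick (by finite intersection) a set $E_\FF \in \FF$ with $E_\FF \notin \mathcal{G}$ for every $\mathcal{G} \in z(1)$. Applying Lemma \ref{d_small} inside $\FF$ and intersecting with $(\omega \sm B_n) \cap E_\FF$ yields $C_\FF \in \FF$ with $\ol{d}(C_\FF) < \eps$, $C_\FF \cap B_n = \emptyset$, and $C_\FF \notin \mathcal{G}$ for every $\mathcal{G} \in z(1)$. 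Taking $C = \bigcup_{\FF \in z(0)} C_\FF$ gives $z(0) \subseteq C^*$, $C \cap B_n = \emptyset$, $z(1) \cap C^* = \emptyset$, and $\ol{d}(C) \le |z(0)| \eps$. Symmetrically, for each $\mathcal{G} \in z(1)$ the consistency hypothesis $z(1) \cap A_n^* = \emptyset$ combined with $\mathcal{G} \notin C^*$ lets us pick $D_\mathcal{G} \in \mathcal{G}$ of density $< \eps$ contained in $\omega \sm (A_n \cup C)$; letting $D = \bigcup_{\mathcal{G} \in z(1)} D_\mathcal{G}$ gives $z(1) \subseteq D^*$, $D$ disjoint from $A_n \cup C$, and $\ol{d}(D) \le |z(1)| \eps$.

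With $\eps$ chosen small enough that $\ol{d}(A'), \ol{d}(B') < 1/6$, the verifications are routine: $s' \in \Se$, since $A' \cap B' = \emptyset$ follows from the four disjointnesses $A_n \cap B_n = A_n \cap D = C \cap B_n = C \cap D = \emptyset$; the sequence $s'$ captures $z$ because $z(0) \subseteq C^* \subseteq A'^*$ and $z(1) \subseteq D^* \subseteq B'^*$; and $O(z, s') \subseteq O(z, s)$ is immediate as any $c$ extending $s'$ extends $s$ and the $z$-condition (a) is unchanged. I expect the main obstacle to be the simultaneous fulfilment of the three competing demands on $C$ and $D$ (small density, membership in the appropriate ultrafilters, and exact avoidance of $B_n$, $A_n$, and each other); this is exactly where the consistency hypothesis—redundant in the $\simeq$-setting of Section \ref{s3}—becomes indispensable.
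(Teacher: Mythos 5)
Your proof is correct and takes essentially the same route as the paper, which simply instructs the reader to repeat the proof of Lemma \ref{basic}: append a single pair $(A_n\cup C,\,B_n\cup D)$ with $C,D$ of small upper density supplied by Lemma \ref{d_small}. Your extra bookkeeping — using consistency to keep $C$ off $B_n$ and $D$ off $A_n\cup C$, and separating the finitely many ultrafilters of $z(0)$ from those of $z(1)$ — just makes explicit the disjointness checks needed for $s'\in\Se$ that the paper leaves to the reader.
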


The proof of Theorem \ref{no_count_sep} is based on the following
auxiliary result.

\begin{lem}\label{main_mod}
For any sequence $(X_n)_{n\in\omega}$ of $C$-sets in $\F$ there
exist a sequence $c=((A_n,B_n))_{n\in\omega}\in \Ce$ and sets
$A,B\subseteq\omega$ such that

\begin{itemize}
\item[--] $\bigcup_{n\in\omega}A_n^*\sub A^*$,
$\bigcup_{n\in\omega}B_n^*\sub B^*$, $A\cap B=\emptyset$,
\item[--] $\mu(A),\mu(B)\le\frac16$; \item[--] for any
$n\in\omega$, the set
\[\{f\in C_p(\omega^*,2)\colon f|A^*\equiv 0,\; f|B^*\equiv
1\}\times\{c\}\]
 is either contained in $X_n$ or disjoint from $X_n$.
\end{itemize}

\end{lem}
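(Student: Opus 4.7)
The plan is to adapt the construction of Lemma \ref{main_lemma} to the space $\F$, replacing density of open sets with the Baire property of $C$-sets. The key dichotomy is that since every $C$-set $X_n \subseteq \F$ has the Baire property, for any nonempty basic open $O(z, s)$ in $\F$ (with $z, s$ consistent) there is a sub-neighborhood $O(z', s')$ and a choice $\epsilon_n \in \{0, 1\}$ such that, setting $Y_n := X_n$ if $\epsilon_n = 0$ and $Y_n := \F \setminus X_n$ if $\epsilon_n = 1$, the complement $O(z', s') \setminus Y_n$ is meager in $O(z',s')$ and therefore decomposes as a countable union of nowhere dense sets $(N_{n,k})_{k \in \omega}$. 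The target is then to produce $c, A, B$ so that the slice $T \times \{c\}$, where $T = \{f \in C_p(\omega^*, 2): f|A^* \equiv 0,\; f|B^* \equiv 1\}$, avoids every $N_{n,k}$; this gives $T \times \{c\} \sub Y_n$ for every $n$, whence $T \times \{c\}$ is contained in $X_n$ (when $\epsilon_n = 0$) or disjoint from $X_n$ (when $\epsilon_n = 1$).

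Enumerate the pairs $(n, k)$ as a single sequence, with $(n, 0)$ appearing before any $(n, k)$ for $k > 0$. Build inductively $z_0 \sqsubset z_1 \sqsubset \cdots$ in $\Ze$, $s_0 \prec s_1 \prec \cdots$ in $\Se$ with each $s_m$ capturing $z_m$, together with the choices $\epsilon_n$. At the stage addressing $(n, k)$: when $k = 0$, first apply the Baire dichotomy to $X_n$ inside the current basic open to fix $\epsilon_n$ and fix the decomposition $(N_{n,k})_k$; then, in any case, refine the current neighborhood to a basic open set disjoint from $N_{n,k}$ (possible since $N_{n,k}$ is nowhere dense), and apply Lemma \ref{basic^*} to extend the last pair of $s$ so it captures the enlarged $z$, keeping $\ol{d}(A_m), \ol{d}(B_m) < 1/6$. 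The extraction mirrors Lemma \ref{main_lemma}: the pairs $(A_m, B_m)$ read off from $(s_m)$ determine $c \in \Ce$, and Lemma \ref{small_covering} applied separately to $(A_m)$ and $(B_m)$ yields $A, B \sub \omega$ with $\bigcup_m A_m^* \sub A^*$, $\bigcup_m B_m^* \sub B^*$, $A \cap B = \emptyset$, and $\mu(A), \mu(B) \le 1/6$ by condition \ref{conditions}(2).

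The homogeneity then follows easily: every $f \in T$ satisfies $f \equiv 0$ on $A^* \supseteq z_m(0)$ and $f \equiv 1$ on $B^* \supseteq z_m(1)$ for every $m$, and $s_m \prec c$, so $(f, c) \in O(z_m, s_m)$; since each such basic neighborhood was built disjoint from its associated $N_{n,k}$, we get $(f, c) \notin N_{n,k}$ for all $n, k$, hence $(f, c) \in Y_n$ for every $n$. The main obstacle is the dual bookkeeping: one must commit to each $\epsilon_n$ on the first encounter with $X_n$ and then absorb the infinitely many $N_{n,k}$ arising from that commitment, all while keeping the capture from Lemma \ref{basic^*} compatible with the density constraint defining $\Se$. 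The saving grace is that Lemma \ref{basic^*} permits capture with arbitrarily small added density, so a geometric $2^{-m}$ budget at each step keeps $A, B$ well within $\ol{d} < 1/6$ and delivers the required bound $\mu(A), \mu(B) \le 1/6$.
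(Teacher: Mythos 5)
Your proposal is correct and takes essentially the same route as the paper: the paper localizes each $C$-set $X_n$ to a nonempty open $V_n$ on which it is comeager or meager and then intersects the resulting dense open sets $U^i_k$ ($i,k\le n$) at stage $n$, while you dually avoid the complementary nowhere dense pieces $N_{n,k}$ one at a time along an enumeration of pairs; both versions use Lemma \ref{basic^*} for capture and Lemma \ref{small_covering} for $A,B$ in the same way.
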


\begin{proof}
We inductively define a decreasing sequence $(V_n)_n$ of nonempty
open subsets of $\F$ and for every $n$ we choose

\begin{itemize}
\item sequences $(U^n_k)_k$  of open sets dense in $V_n$ such that
$\bigcap_{k\in\omega}U^n_k$ is either contained in $X_n$ or
disjoint from $X_n$;

\item $z_n\in \Ze$ and $s_n\in \Se$ capturing $z_n$ such that
$z_{n-1}\sqsubset z_n$, $s_{n-1}\prec s_n$, and
\[ O(z_n,s_n)\sub \bigcap_{i,k\le n}U^i_k.\]
\end{itemize}

Suppose that the construction has been carried out for $i < n$ (or
$n=0$). Since
\[ X_n\cap O(z_{n-1}, s_{n-1})\]
is a $C-$set there is a nonempty open set $V_n\sub O(z_{n-1},
s_{n-1})$ and a sequence of its dense open subsets  $(U^n_k)_k$
such that $\bigcap_{k\in\omega} U^n_k$ is either contained in
$X_n$  or disjoint from it. Then the set $G_n=\bigcap_{i,k\le n}
U^i_k$ is open and nonempty (because $V_i$ are decreasing and
$U^i_k$ are dense in $V_i$). Moreover, $G_n\sub V_n\sub O(z_{n-1},
s_{n-1})$.

Now, we can choose consistent $z_n\in\Ze$ and $s_n\in\Se$ such
that $z_{n-1}\sqsubset z_n$, $s_{n-1}\prec s_n$, and
$O(z_n,s_n)\sub G_n$; by Lemma \ref{basic^*} we can additionally
require that $s_n$ captures $z_n$. The sequence $s_0\prec
s_1\prec\ldots$ defines the unique element
$c=((A_n,B_n))_{n\in\omega}\in\Ce$. We also obtain the sets $A,B$
in the same way as in the proof of Lemma \ref{main_lemma},
applying Lemma \ref{small_covering} to sequences $(A_n)_n$ and
$(B_n)_n$.

It follows that whenever the function $f\in C_p(\omega^*, 2)$
takes values $0$ on $A^*$ and $1$ on $B^*$, the pair $(f,c)$
belongs to $\F$ and, for every $n$, $(f,c)\in O(z_n,s_n)$, since
$s_n\prec c$ and $s_n$ captures $z_n$. Therefore
\[ (f,c) \in \bigcap_{n\in\omega}\bigcap_{i,k\le n}U^i_k =\bigcap_{n\in\omega}\bigcap_{k\in\omega}
U^n_k\sub \bigcap_{k\in\omega} U^n_k,\] for every $n$, and the
lemma follows.
\end{proof}

Theorem \ref{no_count_sep} can be easily derived from the above
lemma. Indeed, if $Y_n$ are $C-$sets in $(C(\omega^*),\tp)$ then
$Z_n=Y_n\cap C_p(\omega^*,2)$ are $C-$sets in $C_p(\omega^*,2)$.
Let $\pi:\F\to C_p(\omega^*,2)$ be the projection onto the first
axis. Then $X_n=\pi^{-1}(Z_n)$ are $C-$sets in the space $\F$.
Applying Lemma \ref{main_mod} to such sets $X_n$ we conclude that
there are two different functions $g_1,g_2$ with $g_i|A^*\equiv
0$, $g_i|B^*\equiv 1$ for $i=1,2$. It follows that $(g_i,c)$ are
not separated by $X_n$ and hence $g_i$ are not separated by the
sets $Y_n$.

\begin{rem}\label{Stone_no_count_sep}
Adjusting the argument from the proof of Theorem
\ref{no_count_sep} for the setting from the paper \cite{MP}, one
can also prove that, for the Stone space $S$ of the measure
algebra, the function space $(C(S),\tp)$ has no sequence of
$C-$sets separating points (this is an unpublished result of R.\
Pol). It follows that the Banach spaces $C(S)$ and
$C(\beta\omega)$ are isomorphic, but there is no Borel-measurable
injection $\varphi: (C(S),\tp)\to (C(\beta \omega ),\tp)$.
\end{rem}

\section{Baire $\sigma$-algebras in function spaces on
$\beta\omega$ and $\omega^*$}\label{s5}
If $(X,\tau)$ is any topological space then the Baire
$\sigma$-algebra $Ba(X,\tau)$ is defined to be the smallest one
making all the continuous functions on $X$ measurable, cf.\
\cite{Ed1}, \cite{Ed2}. Recall that if $(X,\tau)$ is a locally
convex linear topological space then $Ba(X,\tau)$ is actually
generated by all continuous functionals, see \cite{Ed1}, Theorem
2.3.

For a compact space $K$,  we denote by $Ba(C(K), weak)$,
$Ba(C(K),\tp)$  the Baire $\sigma$-algebras in $C(K)$ endowed with
the weak topology, or the pointwise topology, respectively,

Theorem \ref{main_thm} implies directly the following result from
\cite[Theorem 3.4]{APR1}

\begin{thm}[Avil\'{e}s-Plebanek-Rodr\'{i}guez]\label{APR1}
$$Ba(C(\beta\omega), weak)\ne Ba(C(\beta\omega),\tp)\,.$$
\end{thm}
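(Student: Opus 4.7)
The plan is to deduce this statement as an essentially immediate corollary of Theorem \ref{main_thm}, exhibiting $\wh{\mu}$ as a single functional that separates the two Baire $\sigma$-algebras. The argument relies only on two elementary general facts: for any topological space $(X,\tau)$ we have $Ba(X,\tau)\subseteq Borel(X,\tau)$ (since every $\tau$-continuous real function is $\tau$-Borel measurable), and every $\tau$-continuous real function on $X$ is by definition $Ba(X,\tau)$-measurable.

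First I would note that $\wh{\mu}\in C(\beta\omega)^*$, because $\wh{\mu}$ is a finite Radon measure on $\beta\omega$. Hence, viewed as a linear functional, $\wh{\mu}$ is continuous with respect to the weak topology on $C(\beta\omega)$, and consequently is $Ba(C(\beta\omega),weak)$-measurable.

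Second, Theorem \ref{main_thm} asserts that $\wh{\mu}$ is not measurable with respect to $Borel(C(\beta\omega),\tp)$. Combining this with the inclusion $Ba(C(\beta\omega),\tp)\subseteq Borel(C(\beta\omega),\tp)$, we conclude that $\wh{\mu}$ is not $Ba(C(\beta\omega),\tp)$-measurable either. Therefore $\wh{\mu}$ belongs to $Ba(C(\beta\omega),weak)$-measurable functions but not to $Ba(C(\beta\omega),\tp)$-measurable ones, which yields the stronger non-inclusion
\[
Ba(C(\beta\omega),weak)\not\subseteq Ba(C(\beta\omega),\tp),
\]
and in particular the desired inequality of $\sigma$-algebras.

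There is no real obstacle in this argument; all of the work has already been carried out in the proof of Theorem \ref{main_thm}. The only point worth stating explicitly is the trivial but essential observation that Baire sets are Borel, so that non-Borel-measurability with respect to $\tp$ automatically upgrades to non-Baire-measurability with respect to $\tp$.
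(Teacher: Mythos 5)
Your argument is correct: $\wh{\mu}\in C(\beta\omega)^*$ is weakly continuous, hence $Ba(C(\beta\omega),weak)$-measurable, while Theorem \ref{main_thm} together with the inclusion $Ba(C(\beta\omega),\tp)\subseteq Borel(C(\beta\omega),\tp)$ shows it is not $Ba(C(\beta\omega),\tp)$-measurable. This is precisely the one-line deduction the authors acknowledge when they write that Theorem \ref{main_thm} ``implies directly'' the result. However, the proof the paper actually writes out is a different and deliberately more elementary one: it bypasses Section \ref{s3} entirely and uses only the measure-theoretic material of Section \ref{s2}. The key point there is that $Ba(C(\beta\omega),\tp)$ is generated by countably many point evaluations at a time, so if $\widehat{d_\pe}$ were Baire measurable it would be measurable with respect to $\{\delta_x:x\in X\}$ for some countable $X\subseteq\beta\omega$; Corollary \ref{null_on_sep} then forces $\widehat{d_\pe}$ to vanish on $\overline{X\cap\omega^*}$, and restricting to functions supported on the complement of a suitable $\overline{A}$ produces a nonzero measure on $\mathcal{P}(\omega)$, vanishing on singletons, that is Borel (indeed Baire-property) measurable on $2^\omega$ --- contradicting Proposition \ref{BP_nonmeasurable}. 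The trade-off is clear: your route is immediate but inherits the full weight of the category argument in the space $\E$ underlying Theorem \ref{main_thm}, whereas the paper's displayed proof is self-contained modulo Section \ref{s2} and exploits the fact that Baire measurability for $\tp$ is a much more restrictive condition (countable dependence on coordinates) than Borel measurability, which is what makes the ``simpler proof'' possible. Both establish the stronger non-inclusion $Ba(C(\beta\omega),weak)\not\subseteq Ba(C(\beta\omega),\tp)$.
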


Using results from section \ref{s2} we can also give a simpler
proof of the above theorem:

\begin{proof} We shall show that, for any ultrafilter $\pe\in\omega^*$,
the measure $\widehat{d_\pe}$ is not
$Ba(C(\beta\omega),\tp)$-measurable. Assume the contrary.
Then there exists a countable subset $X$ of $\beta\omega$ such
that $\widehat{d_\pe}$ is measurable with respect to the
$\sigma$-algebra of subsets of $C(\beta\omega)$ generated by
$\{\delta_x\colon x\in X\}$. Corollary \ref{null_on_sep} implies
that $\widehat{d_p}$ vanishes on the closure of $X\cap\omega^*$ in
$\beta\omega$. Take $A\subseteq\omega$ such that
$X\cap\omega^*\subseteq \overline{A}$ and
$\widehat{d_\pe}(\overline{A})<1$. Let $E= \{f\in
C(\beta\omega)\colon f|\overline{A}\equiv 0\}$. Observe that
$\widehat{d_\pe}|E$ is measurable with respect to the
$\sigma$-algebra generated by $\{\delta_x\colon x\in
X\cap\omega\}$, and for any subset $C$ of $B=\omega\setminus A$
the characteristic function
$\chi_{\overline{C}}\colon\beta\omega\to \mathbb{R}$ belongs to
$E$. Then the measure $\nu\colon\mathcal{P}(\omega)\to [0,1]$
defined by
\begin{eqnarray*}
v(Z)=d_\pe(Z\cap B)=\widehat{d_\pe}(\overline{Z\cap
B})=\widehat{d_\pe}(\chi_{\overline{Z\cap B}})\,,
\end{eqnarray*}
for $Z\in\mathcal{P}(\omega)$, is nonzero, Borel-measurable and
vanishes on points of $\omega$, a contradiction with Proposition
\ref{BP_nonmeasurable}.
\end{proof}

Note finally that for any compact space $K$ we have the following inclusions
$$
\begin{array}{c c c c c}
Ba(C(K),\tp) & \subset & Borel(C(K),\tp) & \mbox{ } & \mbox{ } \\
 \cap & \mbox{ } & \cap & \mbox{ } & \mbox{ } \\
Ba(C(K),weak) & \subset & Borel(C(K), weak) & \subset & Borel(C(K),norm).
\end{array}
$$

The space $K=2^{\omega_1}$ is an example of a nonmetrizable
compactum $K$ for which all the five $\sigma$-algebras on $C(K)$
are equal, see \cite{APR2}. From previous results and the
proposition below it follows that all inclusions in the above
diagram are strict for the space $K=\beta\omega$. Since
$\beta\omega$ is a continuous image of $\omega^*$, this is also
the case for $K=\omega^*$, cf. \cite[Corollary 3.3]{APR1}.

\begin{prp}
$Bor(C(\beta\omega),\tp)\not\subseteq Ba(C(\beta\omega),weak)$.
\end{prp}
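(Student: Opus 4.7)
I aim to exhibit a pointwise-Borel set $E\subseteq C(\beta\omega)$ that lies in no $\sigma(\mu_n)$ for any countable family $\{\mu_n\}\subseteq M(\beta\omega)=C(\beta\omega)^*$. Since $Ba(C(\beta\omega),weak)$ equals the union of all such $\sigma(\mu_n)$ over countable subfamilies (every element of a $\sigma$-algebra generated by a family already lies in a countably generated subalgebra), this suffices. My candidate is $E=E_K=\{f\in C(\beta\omega):f|_K\equiv 0\}$ for an appropriately chosen uncountable closed $K\subseteq\omega^*$; being an intersection of pointwise-closed hyperplanes $\{f:f(t)=0\}$, the set $E_K$ lies in $Bor(C(\beta\omega),\tp)$ for free. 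A natural choice is $K$ non-separable in $\omega^*$, for instance the support of $\widehat{d_\pe}$ for a fixed free ultrafilter $\pe$, which Corollary~\ref{null_on_sep} guarantees to be non-separable.

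Suppose for contradiction $E_K\in\sigma(\mu_n)$ for some countable $\{\mu_n\}$; write $\Phi=(\mu_n)\colon C(\beta\omega)\to\mathbb R^\omega$ and $V=\bigcap_n\ker\mu_n$. Since the $\sigma(\mu_n)$-atoms are cosets $f+V$ and $0\in E_K$, one needs $V\subseteq E_K$, equivalently $\{\delta_t:t\in K\}\subseteq V^\perp=\overline{\lin(\mu_n)}^{w^*}$; additionally $\Phi(E_K)\subseteq\mathbb R^\omega$ must be Borel. If $V\not\subseteq E_K$ then any $v\in V\setminus E_K$ contradicts the atom-union structure, and we are done. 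The hard case is $V\subseteq E_K$, where the contradiction must come from non-Borelness of $\Phi(E_K)$. Here I would adapt the Baire-category machinery of Section~\ref{s3}---the space $\mathbb E$ and its pivotal Lemma~\ref{main_lemma}---from the specific density measure $\widehat{d_\pe}$ to the countable family $\{\mu_n\}$: the basic open sets of the modified space would encode matching of the coordinates $\mu_n(f)$, and the inductive construction would produce an intersection of dense open sets containing functions both compatible and incompatible with membership in $E_K$, defeating any putative Borel presentation of $\Phi(E_K)$.

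\textbf{Main obstacle.} The principal difficulty lies in generalizing Section~\ref{s3}'s Baire-category argument---built around the specific measure $\widehat{d_\pe}$ and its vanishing on separable subsets of $\omega^*$---to an arbitrary countable family of Radon measures $\mu_n$. Each $\mu_n$ has an atomic part on $\omega$ (determined by countably many values) and a continuous part on $\omega^*$; Corollary~\ref{null_on_sep} can be used measure-by-measure to control continuous parts, but the argument must simultaneously respect all $\mu_n$-values while perturbing $f$ along $K$. The particular case when $\{\mu_n\}$ separates points of $C(\beta\omega)$ (so $V=\{0\}$) is especially delicate, since the direct atom-violation strategy collapses and the entire contradiction must emerge from the image side. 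The choice of $K$ as the (non-separable) support of $\widehat{d_\pe}$ is intended to give enough room to run the Baire-generic construction uniformly across all countable subfamilies of $M(\beta\omega)$.
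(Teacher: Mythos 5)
Your proposal is a plan rather than a proof: the decisive step --- showing that $E_K=\{f:f|_K\equiv 0\}$ lies in no countably generated sub-$\sigma$-algebra $\sigma(\mu_n)$ --- is exactly the point you defer to a hoped-for adaptation of the Section~\ref{s3} machinery, and nothing in the write-up carries it out. That machinery is tailored to a single density measure $\wh{\mu}=\wh{d_\pe}$ and to Borel sets in an auxiliary space $\E$ with a Baire-category structure built from the combinatorics of $\ol{d}$; it is far from clear how to re-engineer the basic neighborhoods so that the generic construction respects the values of an \emph{arbitrary} countable family $\{\mu_n\}\sub C(\beta\omega)^*$ (including their atomic parts on $\omega$ and arbitrary continuous parts on $\omega^*$), and you acknowledge as much. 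There is also a technical flaw in the fallback you do state: if $E_K=\Phi^{-1}(S)$ with $S$ Borel in $\mathbb{R}^\omega$, then $\Phi(E_K)=S\cap\Phi(C(\beta\omega))$ is Borel only \emph{relative to} the (generally non-Borel) image of $\Phi$, so ``non-Borelness of $\Phi(E_K)$ in $\mathbb{R}^\omega$'' is not by itself a contradiction; the argument has to live in the domain. As it stands, the proposal establishes only the easy half (that $E_K$ is $\tp$-closed, hence $\tp$-Borel).

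For comparison, the paper's proof needs none of this heavy apparatus. It takes an almost disjoint family $\{A_\alpha:\alpha<\omega_1\}$, ultrafilters $\FF_\alpha\ni A_\alpha$, and the $\tp$-open set $V=\{f: f(\FF_\alpha)>0 \mbox{ for some }\alpha\}$. If $V$ were weakly Baire it would lie in the $\sigma$-algebra generated by $\{\delta_n\}$ and countably many probability measures $\mu_n$ on $\omega^*$; since the traces $A_\alpha^*$ are pairwise disjoint, some $\beta$ has $\mu_n(\ol{A_\beta})=0$ for all $n$, and restricting to the copy of $2^{A_\beta}$ consisting of characteristic functions $\chi_{\ol{N}}$, $N\sub A_\beta$, one finds that $V$ traces out exactly the ultrafilter $\FF_\beta\cap 2^{A_\beta}$, which is not Borel in the Cantor set --- a contradiction. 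The two ingredients are a counting argument over an almost disjoint family and the classical non-measurability of a nonprincipal ultrafilter; no analogue of Lemma~\ref{main_lemma} is required. If you want to salvage your approach, you would need to actually produce the generalized category argument for arbitrary $\{\mu_n\}$, which is a substantial piece of work the paper deliberately avoids here.
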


\begin{proof}
Let $\{A_\alpha:\alpha<\omega_1\}$ be a family of almost disjoint
subsets of $\omega$. For every $\alpha<\omega_1$ we pick
$\FF_\alpha\in\omega^*$ such that $A_\alpha\in \FF_\alpha$. Let us
consider the set
\[V=\{f\in C(\beta\omega): f(\FF_\alpha)>0\mbox{ for some } \alpha<\omega_1\}.\]
Then $V$ is $\tp$-open; we shall check that $V\notin Ba(C(\beta\omega),weak)$.

Suppose otherwise; then $V$ lies in the $\sigma$-algebra generated
by $\{\delta_n:n\in\omega\}$ and some family
$\{\mu_n:n\in\omega\}$, where every $\mu_n$ is a probability
measure on $\omega^*$. There is $\beta<\omega_1$ such that
$\mu_n(\ol{A_\beta})=0$ for every $n$. Let $F$ be the set of all
$0$\,-$1$-valued functions in $C(\beta\omega)$ which vanish
outside $\ol{A_\beta}$. It follows that the set $F\cap V$ lies in
the $\sigma$-algebra of subsets of $F$ which is generated by the
restrictions of $\mu_n$'s and $\delta_n$'s to $F$ which is simply
the $\sigma$-algebra generated by $\delta_n$ for $n\in A_\beta$.
On the other hand, $F\cap V=\{\chi_{\ol{N}}: N\in \FF_\beta,\;
N\sub A_\beta\}$, a contradiction, since $\FF_\beta\cap
2^{A_\beta}$ is not Borel in the Cantor set $2^{A_\beta}$.
\end{proof}

\end{document}